\tikzset{node distance=2.5cm, 
every state/.style={ 
semithick,
fill=gray!10},
initial text={}, 
double distance=2pt, 
every edge/.style={ 
draw,
->,>=stealth', 
auto,
semithick}}
\pgfplotsset{compat=newest}
\newlength\figH
\newlength\figW
\theoremstyle{definition}
\newtheorem{definition}{\protect\definitionname}
\newtheorem{assumption}[definition]{\protect\assumptionname}
\newtheorem{lemma}[definition]{\protect\lemmaname}
\newtheorem{remark}[definition]{Remark}
\theoremstyle{plain}
\newtheorem{theorem}[definition]{\protect\theoremname}
\newtheorem{problem}{Problem}
\numberwithin{definition}{section}
\DeclareMathOperator{\adj}{Adj}
\DeclareMathOperator{\diam}{diam}
\newcommand{\pushright}[1]{\ifmeasuring@#1\else\omit\hfill$\displaystyle#1$\fi\ignorespaces}
\providecommand{\assumptionname}{Assumption}
\providecommand{\definitionname}{Definition}
\providecommand{\lemmaname}{Lemma}
\providecommand{\theoremname}{Theorem}
\providecommand{\propositionname}{Proposition}
\newcommand{\norm}[1]{\left\lVert#1\right\rVert}
\newcommand{\Var}[1]{\text{Var}\left[#1\right]}
\newcommand{\Expectation}[1]{\mathbb{E}\left[#1\right]}
\newcommand{\ind}[1]{\mathbb{I}\left\{#1\right\}}
\newcommand{\Adjacent}[2]{\adj(#1, #2)=1}
\newcommand{\simplex}{\phi}
\newcommand{\SH}{\mathcal{S}_H}
\newcommand{\ones}{\mathbb{1}}
\newcommand{\dbset}{\mathcal{D}}
\newcommand{\seta}{\mathcal{A}}
\newcommand{\setb}{\mathcal{B}}
\newcommand{\action}{\alpha}
\newcommand{\feasible}{\mathcal{F}}
\newcommand{\dual}{\mu}
\newcommand{\prob}[1]{\mathbb{P}\left(#1 \right)}
\newcommand{\dualmax}{\Lambda}
\newcommand{\new}[1]{#1}
\title{Differentially Private Linear Programming: \\ Reduced Sub-Optimality and Guaranteed Constraint Satisfaction}
\author{
    Alexander Benvenuti\textsuperscript{\rm 1}, Brendan Bialy\textsuperscript{\rm 2}, Miriam Dennis\textsuperscript{\rm 2}, Matthew Hale\textsuperscript{\rm 1}\\
}
\begin{document}





\maketitle


\begin{abstract}
Linear programming is a fundamental tool in a wide range of decision systems. However, without privacy protections, sharing the solution to a linear program may reveal information about the underlying data used to formulate it, which may be sensitive. Therefore, in this paper we introduce an approach for protecting sensitive data while formulating and solving a linear program. First, we prove that this method perturbs objectives and constraints in a way that makes them differentially private. Then, we show that (i) privatized problems always have solutions, and (ii) their solutions satisfy the constraints in their corresponding original, non-private problems. The latter result solves an open problem in the literature. Next, we analytically bound the expected sub-optimality of solutions that is induced by privacy. Numerical simulations show that, under a typical privacy setup, the solution produced by our method yields a~$65\%$ reduction in sub-optimality compared to the state of the art. 
\end{abstract}

\section{Introduction}
Linear programming is used in a wide range of settings, including resource allocation, power systems, and transportation systems. In many modern systems, user data plays an increasing role in formulating such optimization problems. Sensitive information such as investor data, home power consumption, and travel routes may be use used to formulate these problems~\cite{markowitz1953portfolio, stott1979review}, though sharing the solution to an optimization problem may leak this sensitive data~\cite{hsu2014privately}. As a result, interest has arisen in solving linear programs while both (i) preserving the privacy of the data used in the problem formulation and (ii)  ensuring constraint satisfaction.

In this paper we solve the open problem posed in~\cite{munoz2021private}, namely, the privatization of data that is used to generate constraints when solving linear programs, while also maintaining feasibility of solutions with respect to the original, non-private constraints. For a problem with linear constraints~$Ax \leq b$ and cost~$c^Tx$,
the work in~\cite{munoz2021private} privatized the data that is used to generate~$b$ while also ensuring feasibility with respect to the original constraints. 
Then~\cite{munoz2021private} named it as an open problem to 
simultaneously privatize the data that produces~$A$ and ensure satisfaction 
of the original, non-private constraints. We not only solve this open problem, but in fact go one step further by simultaneously privatizing the data that produces all three {\textemdash} $A$,~$b$, and~$c$ {\textemdash} with guaranteed satisfaction of the original constraints. 

To produce a private linear program, we use differential privacy. Differential privacy is a statistical notion of privacy originally developed to protect entries in databases~\cite{dwork2006calibrating}, and it has seen wide use in the controls~\cite{le2013differentially,cortes2016differential, hawkins2020differentially, yazdani2022differentially}, planning~\cite{chen2023differential, benvenuti2023differentially}, and federated learning~\cite{geyer2017differentially, agarwal2021skellam, chen2022fundamental, noble2022differentially} communities for the strong guarantees that it provides.

We use differential privacy in this work partly because of its immunity to post-processing~\cite{dwork2014algorithmic}, namely that arbitrary computations on private data do not weaken differential privacy. 
We consider linear programs in which the cost~$c^Tx$
and constraint terms~$A$ and~$b$ can all depend on user data, and
we use differential privacy to perturb each of these terms in order to protect the data that is used to generate them. The result is a privacy-preserving linear program. We then solve this optimization problem, which is simply a way of post-processing the privacy-preserving problem. Thus, the solution to the private problem preserves the privacy of the data used to formulate the problem, as do any downstream computations that use that solution. 

\begin{table*}
\centering
\resizebox{\textwidth}{!}{
\begin{tabular}{|c|c|c|c|c|c|c|}
\hline
 & \cite{hsu2014privately} & \cite{cummings2015privacy}& \cite{dvorkin2020differentially}& \cite{munoz2021private} & \cite{benvenuti2024guaranteed}& This work \\
 \hline
 Privatize~$A$ & \checkmark & \checkmark & &  &\checkmark & \checkmark \\
 \hline
Privatize~$b$ & \checkmark & \checkmark & \checkmark & \checkmark & & \checkmark \\
\hline
Privatize~$c$ & \checkmark & \checkmark &  &  & & \checkmark\\
\hline
Satisfy Constraints &  &  & &\checkmark & \checkmark & \checkmark \\
\hline
\end{tabular}}
\caption{Comparison of differentially private linear programming approaches in the literature. 
}
\label{table:related_work}
\end{table*}

As noted in \cite{benvenuti2024guaranteed} and~\cite{munoz2021private}, common privacy mechanisms such as the Gaussian and Laplace mechanisms~\cite{dwork2014algorithmic} add noise with unbounded support. Such mechanisms can perturb constraints by arbitrarily large amounts, which can therefore cause the solution to a privatized problem to be infeasible with respect to the original constraints. Motivated by this challenge, we develop a new matrix truncated Laplace mechanism to privatize the data that produces~$A$, and we use the truncated Laplace mechanism developed in~\cite{munoz2021private, geng2020tight} to privatize the data that produces~$b$. This approach allows us to privatize the constraints such that they only become tighter, thereby ensuring that the solution to the private problem always satisfies the original, non-private constraints. Since the cost does not affect feasibility, we use the unbounded Laplace mechanism  in~\cite{dwork2014algorithmic} to provide privacy for the cost. We then bound the accuracy of our method, which provides users with a tool to calibrate privacy based on its tradeoff with performance. To summarize, our contributions are:
\begin{itemize}
    \item We develop a differential privacy mechanism that simultaneously privatizes all terms in a linear program (Theorem~\ref{thm:privacy}).
    \item We prove that a privatized problem produces a solution that is feasible with respect the constraints in the original, non-private problem (Theorem~\ref{thm:feasible}), which solves an open
    problem in the literature. 
    \item We bound the accuracy of our method, namely the increase/decrease in optimal cost, 
    in terms of privacy parameters (Theorem~\ref{thm:lp_accuracy_e}).
    \item We empirically compare the performance of our method to the state of the art and show that the solution produced by our method yields a~$65\%$ reduction in sub-optimality relative to existing work
    (Section~\ref{sec:sim}).
\end{itemize}

\subsection{Related Work}
There exists substantial previous work on differential privacy in optimization, specifically looking at privacy for objective functions in distributed optimization \cite{huang2015differentially, wang2016differentially,  han2016differentially, nozari2016differentially, dobbe2018customized, lv2020differentially}. We differ from these works because we consider the constraints to also be sensitive, not just objectives. Privacy for optimization with linear constraints has been previously investigated in \cite{hsu2014privately, cummings2015privacy, dvorkin2020differentially, munoz2021private, benvenuti2024guaranteed, kaplan2024differentially}. Both~\cite{hsu2014privately} and \cite{cummings2015privacy} consider differential privacy for both the costs and constraints, but they allow for constraints to be violated, which is unacceptable in many applications, e.g., if constraints encode safety. 
The authors in~\cite{dvorkin2020differentially} analyze privacy for the constant vector 
in equality constraints by reformulating their optimization problem as a stochastic chance-constrained optimization problem. The authors in~\cite{kaplan2024differentially} consider privacy for all the constraints with a focus on maximizing the number of constraints satisfied. However, both of these works still allow for constraint violation. Both~\cite{munoz2021private} and~\cite{benvenuti2024guaranteed} address the problem of privately solving convex optimization problems with linear constraints with guaranteed constraint satisfaction, but~\cite{munoz2021private} only privatizes~$b$ and~\cite{benvenuti2024guaranteed} only privatizes~$A$. We differ because we consider privacy for all components of an LP simultaneously. 
Moreover,~\cite{benvenuti2024guaranteed} privatizes the entries of~$A$ themselves, though here we consider the more general setting of allowing~$A$ and~$b$ to be functions of user data, and we privatize that user data, not the entries of~$A$ and~$b$.  
Table~\ref{table:related_work} summarizes our place in the literature.


\subsection{Notation}\label{subsec:notation}
For $N\in\mathbb{N}$, we use $[N] := \{1, 2, \ldots, N\}$. We use $|\cdot|$ to denote the cardinality of a set and~$A\ominus B$ to denote the symmetric difference between two sets~$A$ and~$B$. 
We use~$\Expectation{X}$ to denote the expectation of a random variable~$X$ and~$\mathcal{L}(\sigma)$ to be a zero-mean Laplace distribution with scale parameter~$\sigma$. We use~$M_{i,j}$ to denote the~$i^{th}j^{th}$ entry of a matrix. Additionally, we use~$\norm{M}_{1, 1} = \sum_{i=1}^{m}\sum_{j=1}^n |M_{i,j}|$ to denote the~$(1, 1)$-norm of a matrix. We use~$\ones^{m\times n}$ to be an~$m\times n$ matrix of all ones and~$[-s\ones^{m \times n}, s\ones^{m \times n}]$ to be an~$mn$-fold Cartesian product of the interval~$[-s, s]$. We use $A\circ B$ as the Hadamard product between matrices~$A$ and~$B$. We write~$\diam(S) = \sup_{s_1, s_2\in S}\norm{s_1-s_2}_2$ for the diameter of a set~$S$.
\section{Preliminaries and Problem Formulation}

\subsection{Linear Programming}\label{subsec:LP}
We consider linear programs (LPs) formed from a database~$D\in\dbset$ taking the form
    \begin{equation}
        \begin{aligned}
        &\begin{aligned}
            \underset{x}{\operatorname{maximize}} &\quad c(D)^Tx
        \end{aligned}
            \\
            &\begin{aligned}
                \operatorname{subject} \operatorname{to } \,\, A(D)x&\leq b(D), \quad x\geq 0, 
            \end{aligned}
        \end{aligned}   
        \tag{P}\label{opt:primal}
    \end{equation} 
    where~$\dbset$ is the set of all possible realizations of the database~$D$,~$c(D)\in\mathbb{R}^n$ is the ``cost vector'', $A(D)\in\mathbb{R}^{m\times n}$ is the ``constraint coefficient matrix'', and~$b(D)\in\mathbb{R}^m$ is the ``constraint vector''.
    We also use the sets~$\seta$ to denote the set of all realizations of~$A(D)$ and~$\setb$ and~$b(D)$ for 
    all~$D \in \dbset$.
We define the feasible region of the LP for a database realization~$D$ as
\begin{equation}\label{eq:fd}
    \feasible(D) = \{x \in \mathbb{R}^n : A(D)x\leq b(D)\}. 
\end{equation}
        \begin{remark}
        We include the constraint~$x\geq 0$ without loss of generality since the constraints in a problem may be reformulated to shift the feasible region to the non-negative orthant without changing the problem. We do this because having strictly positive decision variables allows us have insight into how the feasible region changes when perturbing the constraints, which plays a key role in our feasibility analysis in Section~\ref{sec:priv_constraints}.
    \end{remark}
    \begin{assumption}\label{ass:slate}
        For every~$D\in\dbset$, Problem~\eqref{opt:primal} satisfies Slater's condition.
    \end{assumption}

    
    \begin{remark}\label{rem:equality}
        Assumption~\ref{ass:slate} is common in the optimization literature. 
        Slater's condition says that for the constraints~$Ax\leq b$ there exists a point~$\bar{x}$ such that~$A\bar{x}-b<0$, and thus Assumption~\ref{ass:slate} states that such a point must exist for each realization of the database~$D$. Satisfying Slater's condition implies that the feasible region~$\feasible(D)$ defined by~\eqref{eq:fd} has non-empty interior for all~$D\in\dbset$. If~$\feasible(D)$ has empty interior, then any perturbation to the constraints can produce a private problem whose solution is automatically infeasible with respect to the original, non-private constraints. Thus, such constraints are fundamentally incompatible with privacy.
    We enforce Assumption~\ref{ass:slate} in order to
    only consider problems where it is at least possible
    to attain both privacy and feasibility simultaneously, though
    we still must determine how to do so. 

\begin{assumption}\label{ass:bounded}
    The set~$\dbset$ is bounded and the bounds are publicly available.
\end{assumption} 

Assumption~\ref{ass:bounded} is quite mild since user data may represent physical quantities that do not exceed certain bounds, e.g., with voltages in a power grid, and these can be publicly known without revealing any sensitive user data. 
    
    \end{remark}
    
    
    Problem~\eqref{opt:primal} admits an equivalent dual problem of the form 
    \begin{equation}
        \begin{aligned}
        &\begin{aligned}
            \underset{\dual}{\operatorname{minimize}} &\quad \dual^T b(D)
        \end{aligned}
            \\
            &\begin{aligned}
                \operatorname{subject} \operatorname{to } \,\, \dual^T A(D)&\leq c(D)^T, \quad \dual \geq 0. 
            \end{aligned}
        \end{aligned}   
        \tag{DUAL}\label{opt:dual}
    \end{equation} 
    

    

\subsection{Differential Privacy}
We will provide differential privacy to a database~$D$ by perturbing each component of the LP that it produces, namely~$A(D)$,~$b(D)$, and~$c(D)$. The goal of differential privacy is to make ``similar" pieces of data appear approximately indistinguishable, and the notion of ``similar" is defined by an adjacency relation~\cite{dwork2014algorithmic}. 

\begin{definition}[Adjacency]\label{def:adj2}
    Two databases~$D$ and~$D'$ are said to be ``adjacent" if they differ in at most one entry. If two databases~$D$ and~$D'$ are adjacent, we say~$\adj(D, D')=1$; otherwise 
    we write~$\adj(D, D')=0$.
\end{definition}

To 
make adjacent pieces of data appear approximately indistinguishable,
we implement differential privacy, which is done using a
randomized map called a ``mechanism". 
In its general form, differential privacy
protects a sensitive piece of data~$y$ by
randomizing some function of it, say~$f(y)$. In the case of linear programming, we privatize three functions of the sensitive data~$D$, namely~$A(D)$,~$b(D)$, and~$c(D)$.
\begin{definition}[Differential Privacy; \cite{dwork2014algorithmic}]\label{def:dp}
    Fix a probability space $(\Omega, \mathcal{F}, \mathbb{P})$. 
    Let~$\epsilon>0$ and $\delta\in[0, \frac{1}{2})$ be given. A mechanism $\mathscr{M}:\mathbb{R}^{m\times n}\times\Omega\to\mathbb{R}^{m\times n}$ is ($\epsilon, \delta)$-differentially private if for all $V(D), W(D)\in\mathbb{R}^{m\times n}$ that are adjacent in the sense of Definition \ref{def:adj2}, we have
            $\mathbb{P}[\mathscr{M}(V)\in T] \leq e^\epsilon \mathbb{P}[\mathscr{M}(W)\in T]+\delta$
             for all Borel measurable sets  $T\subseteq\mathbb{R}^{m\times n}$. 
\end{definition}

Since all three components of Problem~\eqref{opt:primal} require privacy, next we state a lemma on how composing private mechanisms affects privacy.

\begin{lemma}[Sequential Composition of Private Mechanisms~\cite{dwork2014algorithmic}]\label{lem:comp}
    For~$i\in[N]$, fix~$\alpha_i\geq 0$ such that~$\sum_{i=1}^N \alpha_i = 1$. Let~$\mathscr{M}_i:\dbset\to \mathcal{R}_i$ for~$i\in[N]$ be an~$(\new{\alpha_i}\epsilon, \new{\alpha_i}\delta)$-differentially private mechanism. If~$\mathscr{M}_{[N]}:\dbset \to \prod_{i=1}^N \mathcal{R}_i$ is defined to be~$\mathscr{M}_{[N]}(D) = \left(\mathscr{M}_1(D), \ldots, \mathscr{M}_N(D)\right)$ then~$\mathscr{M}_{[N]}$ is~$(\epsilon, \delta)$-differentially private.
\end{lemma}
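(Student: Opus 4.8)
The plan is to reduce the statement to the case $N=2$ by induction and then to prove that case directly from the definition of differential privacy. For the reduction I would set $\mathscr{M}_{[k]}(D) = (\mathscr{M}_1(D), \ldots, \mathscr{M}_k(D))$ and prove by induction on $k \in [N]$ that $\mathscr{M}_{[k]}$ is $\bigl((\sum_{i=1}^{k}\alpha_i)\epsilon, (\sum_{i=1}^{k}\alpha_i)\delta\bigr)$-differentially private. The base case $k=1$ is exactly the hypothesis on $\mathscr{M}_1$. For the inductive step, write $\mathscr{M}_{[k+1]}(D) = \bigl(\mathscr{M}_{[k]}(D), \mathscr{M}_{k+1}(D)\bigr)$ and apply the two-mechanism composition statement with privacy parameters $(\sum_{i=1}^{k}\alpha_i)(\epsilon,\delta)$ for $\mathscr{M}_{[k]}$ (from the inductive hypothesis) and $\alpha_{k+1}(\epsilon,\delta)$ for $\mathscr{M}_{k+1}$, which are additive under composition and sum to $(\sum_{i=1}^{k+1}\alpha_i)(\epsilon,\delta)$. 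Setting $k=N$ and using $\sum_{i=1}^{N}\alpha_i = 1$ then gives $(\epsilon,\delta)$-differential privacy, as claimed; the only bookkeeping specific to this lemma, as opposed to the underlying composition fact, is this normalization of the weights, which in particular keeps every partial sum of $\delta$'s below $\tfrac12$.

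For the two-mechanism case, let $\mathscr{M}_1$ and $\mathscr{M}_2$ be $(\epsilon_1,\delta_1)$- and $(\epsilon_2,\delta_2)$-differentially private, respectively, with independently drawn internal randomness (as is standard for sequential composition), and fix adjacent databases $D,D'$. Denoting by $P_i,Q_i$ the output laws of $\mathscr{M}_i(D)$ and $\mathscr{M}_i(D')$, independence makes the joint output laws the product measures $P_1\otimes P_2$ and $Q_1\otimes Q_2$, and it suffices to show $(P_1\otimes P_2)(T) \le e^{\epsilon_1+\epsilon_2}(Q_1\otimes Q_2)(T) + \delta_1+\delta_2$ for every measurable $T$. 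I would slice $T$ over its first coordinate, writing $T_{y_1} = \{y_2 : (y_1,y_2)\in T\}$ and $(P_1\otimes P_2)(T) = \int P_2(T_{y_1})\,dP_1(y_1)$, then bound $P_2(T_{y_1}) \le e^{\epsilon_2}Q_2(T_{y_1}) + \delta_2$ using the differential privacy of $\mathscr{M}_2$ applied to each fixed measurable slice, and finally handle the remaining integral against $P_1$ by the layer-cake identity $\int f\,dP_1 = \int_0^1 P_1(\{f>t\})\,dt$ for the $[0,1]$-valued function $f(y_1) = Q_2(T_{y_1})$, invoking the differential privacy of $\mathscr{M}_1$ on the superlevel sets $\{f>t\}$. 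Making this rigorous for general output spaces $\mathcal{R}_i$ requires first passing to densities via the Radon--Nikodym theorem (or using regular conditional probabilities) so that the slicing and Fubini manipulations are justified.

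The step I expect to be the crux is the accounting of the two additive error terms: carried out naively, the two applications of differential privacy leave a spurious multiplicative factor $e^{\epsilon_2}$ (or $e^{\epsilon_1}$) in front of one of $\delta_1,\delta_2$, so getting the slacks to combine to exactly $\delta_1+\delta_2$ requires being deliberate about which output measure each unit of slack is charged against --- for instance by splitting the integration domain along the ``bad set'' where the relevant density ratio exceeds its privacy bound and treating that piece separately. This is precisely the technical content of the basic composition theorem of~\cite{dwork2014algorithmic}, so an equally acceptable route is simply to invoke that theorem with $\epsilon_i = \alpha_i\epsilon$ and $\delta_i = \alpha_i\delta$; the induction in the first paragraph is then immediate.
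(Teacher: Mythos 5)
Your proposal is correct, and it matches the paper's treatment: the paper offers no proof of Lemma~\ref{lem:comp}, importing it directly from~\cite{dwork2014algorithmic}, which is exactly your fallback route of invoking basic composition with parameters $\epsilon_i=\alpha_i\epsilon$, $\delta_i=\alpha_i\delta$ and noting that the weights sum to one. Your self-contained sketch (induction to the two-mechanism case, slicing plus the layer-cake identity, and charging the slack via $\min\{1,\,e^{\epsilon_2}Q_2(T_{y_1})+\delta_2\}\le\min\{1,\,e^{\epsilon_2}Q_2(T_{y_1})\}+\delta_2$ to avoid the spurious $e^{\epsilon}$ factor on a $\delta$) is also sound and correctly identifies the one genuinely delicate step.
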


\new{We refer to the~$\alpha_i$'s as the ``privacy budget allocation", since they divide~$\epsilon$ and~$\delta$ among the privacy mechanisms.} Lemma~\ref{lem:comp} implies that an algorithm containing individual privatizations of~$A(D)$,~$b(D)$, and~$c(D)$ is itself differentially private with parameters equal to the sum of the privacy parameters from each individual privatization. This property allows us to form a linear program composed of each privatized quantity and ensure that forming that program is differentially private. Next we state a lemma that solving such an optimization problem also preserves the privacy of the underlying database~$D$.

\begin{lemma}[Immunity to Post-Processing; {\cite{dwork2014algorithmic}}]\label{lem:arbitrary}
    Let~$\mathscr{M}:\mathbb{R}^{m\times n}\times \Omega \to \mathbb{R}^{m\times n}$ be an~$(\epsilon, \delta)$-differentially private mechanism. Let~$h:\mathbb{R}^{m\times n}\to\mathbb{R}^{p\times q}$ be an arbitrary mapping. Then the composition~$h\circ\mathscr{M}:\mathbb{R}^{m\times n}\to\mathbb{R}^{p\times q}$ is~$(\epsilon, \delta)$-differentially private.
\end{lemma}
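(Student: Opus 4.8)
The plan is to prove the statement first for a deterministic (measurable) map~$h$ and then extend to randomized maps by a mixing argument. The key observation for the deterministic case is that pulling an event back through~$h$ turns a statement about the output distribution of~$h\circ\mathscr{M}$ into a statement about the output distribution of~$\mathscr{M}$ itself, which is precisely what Definition~\ref{def:dp} controls.

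\textbf{Step 1 (deterministic, measurable~$h$).} Fix adjacent $V(D),W(D)\in\mathbb{R}^{m\times n}$ and an arbitrary Borel set $T\subseteq\mathbb{R}^{p\times q}$. Define $S := h^{-1}(T)=\{y\in\mathbb{R}^{m\times n}: h(y)\in T\}$; since $h$ is Borel measurable, $S$ is a Borel subset of $\mathbb{R}^{m\times n}$. For any input~$U$ we then have
\[
\mathbb{P}[(h\circ\mathscr{M})(U)\in T] = \mathbb{P}[\mathscr{M}(U)\in h^{-1}(T)] = \mathbb{P}[\mathscr{M}(U)\in S].
\]
Applying Definition~\ref{def:dp} to the Borel set~$S$ gives $\mathbb{P}[\mathscr{M}(V)\in S]\le e^{\epsilon}\mathbb{P}[\mathscr{M}(W)\in S]+\delta$, and substituting this back yields $\mathbb{P}[(h\circ\mathscr{M})(V)\in T]\le e^{\epsilon}\mathbb{P}[(h\circ\mathscr{M})(W)\in T]+\delta$. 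Since $T$ was arbitrary, $h\circ\mathscr{M}$ is $(\epsilon,\delta)$-differentially private.

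\textbf{Step 2 (randomized~$h$).} If $h$ carries its own randomness, write it as $h(y,\omega')$ with $\omega'$ drawn from an independent probability space, so that $y\mapsto h(y,\omega')$ is deterministic and measurable for each fixed~$\omega'$. By Step~1, for each fixed~$\omega'$ the composition $\mathscr{M}(\cdot)\mapsto h(\mathscr{M}(\cdot),\omega')$ satisfies the $(\epsilon,\delta)$ inequality. The probability $\mathbb{P}[(h\circ\mathscr{M})(V)\in T]$ is the expectation over~$\omega'$ of these conditional probabilities (tower property / Fubini, using independence of~$\omega'$ from the randomness inside~$\mathscr{M}$), and the inequality $p\le e^{\epsilon}q+\delta$ is preserved under taking such mixtures because integration is linear and monotone. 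Hence the bound carries over to randomized~$h$.

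\textbf{Main obstacle.} The only real subtlety is the phrase ``arbitrary mapping": for the preimage step to be meaningful one needs $h^{-1}(T)$ to be Borel, i.e., $h$ must be Borel measurable — which is implicitly required for the output distribution of $h\circ\mathscr{M}$ to be well defined in the first place. If one wants literally arbitrary~$h$, the fix is to run the same argument with outer measures and check the two-sided bound still goes through; otherwise one simply adopts the standard convention that $h$ is measurable, which covers every post-processing step used in this paper, including solving the privatized linear program. Everything beyond this point is routine bookkeeping.
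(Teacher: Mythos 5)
Your proof is correct: the preimage argument for measurable deterministic $h$ followed by averaging over the independent randomness of $h$ is exactly the standard argument from \cite{dwork2014algorithmic}, which this paper cites for Lemma~\ref{lem:arbitrary} without reproducing a proof. Your measurability caveat is also the right reading of ``arbitrary mapping,'' and it covers the paper's only use of the lemma, namely solving the privatized linear program.
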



Since solving an optimization problem is a form of post-processing, Lemma~\ref{lem:arbitrary} implies that the solution to an $(\epsilon, \delta)$-differentially private optimization problem is also $(\epsilon, \delta)$-differentially private, allowing the solution to a privatized form of Problem~\eqref{opt:primal} to be shared without harming the privacy of~$D$.

\subsection{Problem Statements}
Consider Problem~\eqref{opt:primal}. 
Computing~$x^*$ without any protections depends on the underlying sensitive database~$D$, and thus computing and using~$x^*$ can reveal information about~$D$. Therefore, we seek to develop a framework for solving problems in the form of Problem~\eqref{opt:primal} that preserves the privacy of~$D$ while still satisfying the constraints in Problem~\eqref{opt:primal}. 
This will be done by solving the following problems.

\begin{problem}\label{prob:private}
    Develop a privacy mechanism to privatize~$D$ when computing each component of a linear program (namely, $A(D)$, $b(D)$, and~$c(D)$).
\end{problem}

\begin{problem}\label{prob:feasible}
    Prove that a solution to the privately generated optimization problem also satisfies the constraints of the original, non-private problem.
\end{problem}

\begin{problem}\label{prob:accurate}
    Bound the sub-optimality in solutions that is induced by the privacy mechanism in terms of the privacy parameters~$\epsilon$ and~$\delta$.
\end{problem}

\section{Private Constraints}\label{sec:priv_constraints}
In this section, we solve Problems~\ref{prob:private} and~\ref{prob:feasible}. 
Specifically, we perturb the matrix~$A(D)$, the vector~$b(D)$, and the vector~$c(D)$ in order to privatize~$D$.
    Since~$A(D)$, ~$b(D)$, and~$c(D)$ all have different properties and requirements to preserve feasibility, we use separate privacy mechanisms for each. 
    We begin with privacy for the matrix~$A(D)$.
\subsection{Privacy for~$A(D)$}
\begin{definition}\label{def:l11_sens}
    The $L_{1,1}$-sensitivity of a function $f:\dbset\to\mathbb{R}^{m\times n}$ is
    \begin{equation}
        \Delta_{1,1} f = \sup_{D, D': \Adjacent{D}{D'}} \norm{f(D) - f(D')}_{1, 1}.
        \end{equation}
\end{definition}


Next, we extend the definition of the Truncated Laplace Distribution in~\cite{munoz2021private} to matrix-valued draws.

\begin{lemma}[Matrix-Variate Truncated Laplace Mechanism]
\label{lem:matrix_lap}
    Let privacy parameters~$\epsilon>0$ and~$\delta\in(0, \frac{1}{2}]$ and sensitivity~$\Delta_{1,1}A$ be given.
    The Matrix-Variate Truncated Laplace Mechanism takes a matrix-valued function of sensitive data~$F(y)\in\mathbb{R}^{m\times n}$ as input and outputs the private approximation of~$F(y)$, denoted $\tilde{F}(y) = F(y) + Z \in \mathbb{R}^{m\times n}$, 
    where ${Z_{i,j}\sim\mathcal{L}_T(\sigma_A, \mathcal{S}_A)}$ for all~$i\in[m]$ and~$j\in[n]$. Here,~$\mathcal{L}_T(\sigma_A, \mathcal{S}_A)$ is the scalar truncated Laplace distribution with density
        $f(Z_{i,j}) = \frac{1}{\zeta}\exp\left(-\frac{1}{\sigma_A}|Z_{i,j}|\right),$
    where~$\mathcal{S}_A := [-s_A, s_A]$ and the values of~$s_A$ and~$-s_A$ are bounds on the private outputs such that~$Z_{i,j}\in\mathcal{S}_A$. We define~$\zeta=\mathbb{P}(Z_{i,j}\leq |s_A|)$, and~$\sigma_A$ is the scale parameter of the distribution. The Matrix-Variate Truncated Laplace Mechanism is~$(\epsilon, \delta)$-differentially private 
    if~$\sigma_A \geq \frac{\Delta_{1,1}A}{\epsilon}$ and
        $s_A = \frac{\Delta_{1,1}A}{\epsilon}\log\left(\frac{mn(\exp(\epsilon)-1)}{\delta}+1\right)$.
        %
\end{lemma}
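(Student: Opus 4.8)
The plan is to adapt the standard proof of the multivariate Laplace mechanism to the truncated, matrix-valued setting, viewing $F(D)\in\mathbb{R}^{m\times n}$ as a vector in $\mathbb{R}^{mn}$ so that $\norm{\cdot}_{1,1}$ is just the $\ell_1$-norm of that vector. Fix a pair of adjacent databases (Definition~\ref{def:adj2}) and write $v$ and $w$ for the two matrices they produce, so that $\sum_{i,j}|v_{i,j}-w_{i,j}| = \norm{v-w}_{1,1}\le\Delta_{1,1}A$ by Definition~\ref{def:l11_sens}. Because the entries of $Z$ are independent, the output $\tilde F = v+Z$ has density $p_v(u) = \prod_{i,j}\frac{1}{\zeta}\exp(-|u_{i,j}-v_{i,j}|/\sigma_A)\,\mathbb{1}\{|u_{i,j}-v_{i,j}|\le s_A\}$, and likewise $p_w$ for the output $\tilde F'$ under $w$. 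Fix a Borel set $T\subseteq\mathbb{R}^{m\times n}$ and split it as $T=(T\cap\mathcal{R})\cup(T\setminus\mathcal{R})$, where $\mathcal{R}=\{u:|u_{i,j}-w_{i,j}|\le s_A\text{ for all }i,j\}$ is the support of $p_w$, i.e.\ the set of outputs reachable when the underlying matrix is $w$.

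On $T\cap\mathcal{R}$ I would bound the density ratio pointwise. At any $u\in\mathcal{R}$ with $p_v(u)>0$, every indicator factor in both densities equals $1$ and the normalizers $\zeta$ cancel, so applying the reverse triangle inequality coordinatewise yields
\[
\frac{p_v(u)}{p_w(u)} = \exp\!\Big(\tfrac{1}{\sigma_A}\textstyle\sum_{i,j}\big(|u_{i,j}-w_{i,j}|-|u_{i,j}-v_{i,j}|\big)\Big) \le \exp\!\Big(\tfrac{\norm{v-w}_{1,1}}{\sigma_A}\Big) \le \exp\!\Big(\tfrac{\Delta_{1,1}A}{\sigma_A}\Big) \le e^{\epsilon},
\]
using $\sigma_A\ge\Delta_{1,1}A/\epsilon$. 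Integrating over $T\cap\mathcal{R}$ gives $\mathbb{P}[\tilde F\in T\cap\mathcal{R}]\le e^\epsilon\,\mathbb{P}[\tilde F'\in T]$.

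It then remains to show that the ``overflow'' mass satisfies $\mathbb{P}[\tilde F\in T\setminus\mathcal{R}]\le\mathbb{P}[\tilde F\notin\mathcal{R}] = \mathbb{P}[\exists\,(i,j):|v_{i,j}+Z_{i,j}-w_{i,j}|>s_A]\le\delta$, which is the heart of the argument. Since $|Z_{i,j}|\le s_A$ almost surely, the coordinate-$(i,j)$ event forces $Z_{i,j}$ to fall within $|v_{i,j}-w_{i,j}|$ of the relevant endpoint $\pm s_A$; integrating the truncated Laplace density over that sliver gives a per-coordinate bound of the form $\frac{\sigma_A}{\zeta}e^{-s_A/\sigma_A}\big(e^{|v_{i,j}-w_{i,j}|/\sigma_A}-1\big)$. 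A union bound over the $mn$ entries, the identity $\zeta=2\sigma_A(1-e^{-s_A/\sigma_A})$, and $|v_{i,j}-w_{i,j}|\le\Delta_{1,1}A$ reduce the claim to a scalar inequality in $s_A$; solving it with $\sigma_A=\Delta_{1,1}A/\epsilon$ shows that the prescribed $s_A=\frac{\Delta_{1,1}A}{\epsilon}\log\!\big(\frac{mn(e^\epsilon-1)}{\delta}+1\big)$ is sufficient (and for larger $\sigma_A$ the required $s_A$ only shrinks, while $\delta\in(0,\tfrac12]$ keeps the logarithm's argument above $1$, so $s_A>0$). Adding the two regional bounds gives $\mathbb{P}[\tilde F\in T]\le e^\epsilon\mathbb{P}[\tilde F'\in T]+\delta$ for every Borel $T$, which is exactly $(\epsilon,\delta)$-differential privacy per Definition~\ref{def:dp}.

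The main obstacle is precisely this overflow estimate: one must quantify how much probability mass of each per-coordinate truncated Laplace is pushed outside the adjacent input's support window, sum those contributions cleanly over all $mn$ entries, and verify that the stated $s_A$ drives the total below $\delta$. By contrast, the $e^\epsilon$ factor on the overlap region is routine once the truncation has been accounted for, essentially reproducing the classical Laplace-mechanism computation.
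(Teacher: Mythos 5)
Your proposal follows essentially the same route as the paper's proof: split each measurable output set into the overlap of the two truncated supports and the overflow region, bound the density ratio on the overlap coordinatewise via the reverse triangle inequality and the $L_{1,1}$-sensitivity to get the $e^{\epsilon}$ factor, and bound the overflow mass by a union bound over the $mn$ entries on the slivers near the truncation endpoints, using $\zeta = 2\sigma_A(1-e^{-s_A/\sigma_A})$ to show the stated $s_A$ drives the total below $\delta$. The only differences are minor: you count one sliver per coordinate where the paper conservatively integrates both (so your bound comes out at $\delta/2$ rather than exactly $\delta$), and your parenthetical that taking $\sigma_A$ larger than $\Delta_{1,1}A/\epsilon$ only helps is not needed and should be treated with care (the tail-mass condition does not improve monotonically with $\sigma_A$ at fixed $s_A$); the paper's own proof likewise only verifies the case $\sigma_A=\Delta_{1,1}A/\epsilon$.
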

\begin{proof}
See Technical Appendix~\ref{app:lap_mat}.    
\end{proof}
We apply Lemma~\ref{lem:matrix_lap} to the entire constraint matrix~$A(D)$ to produce a differentially private constraint matrix~$\bar{A}$. 
If some entry~$A(D)_{i,j}$ is identically zero for all~$D\in\dbset$, then that zero entry may represent that there is no physical relationship between a decision variable and a constraint. For example, in a smart power grid system, one home's power consumption may not influence its neighbor's power consumption. Given their practical relevance, we wish to preserve such properties when privacy is implemented. 
To ensure that \new{identically} zero-valued entries 
in~$A(D)$ 
\new{(which we refer to as ``non-sensitive'' entries)} remain unchanged by privacy, we set
\begin{equation}\label{eq:barA}
    \bar{A} = A(D)+(s_A\ones^{m\times n}+Z)\circ\ind{A(D)\neq 0}, 
\end{equation}
where~$\ind{A\new{(D)}\neq 0}\in\mathbb{R}^{m\times n}$ 
is a matrix of ones and zeros where~$\ind{A\new{(D)}\neq 0}_{i,j} = 1$ if \new{there exists some~$D\in\mathcal{D}$ such that~$A(D)_{i,j}\neq 0$} and 
$\ind{A\new{(D)}\neq 0}_{i,j} = 0$ if~$A\new{(D)}_{i,j} = 0$ \new{ for all~$D\in\mathcal{D}$}.
We add~$s_A\ones^{m\times n}$ in~\eqref{eq:barA} to ensure that the coefficients in~$\bar{A}$ can only become larger than
they were in~$A(D)$, thus tightening the constraints to promote feasibility of private solutions with respect to the original, non-private constraints.



\subsection{Privacy for~$b(D)$}
To enforce privacy for~$b(D)$, we leverage the approach used in \cite{munoz2021private}, which we restate here for completeness. We begin by defining the sensitivity of a vector-valued function.
\begin{definition}\label{def:l1_sens}
    The $\ell_{1}$-sensitivity of a function $f:\dbset\to\mathbb{R}^{m}$ is
        $\Delta_{1} f = \sup_{D, D': \Adjacent{D}{D'}} \norm{f(D) - f(D')}_{1}$.
\end{definition}

We use the multivariate Truncated Laplace Mechanism to enforce privacy for the constraint vector~$b(D)$.

\begin{lemma}[Multivariate Truncated Laplace Mechanism~\cite{munoz2021private, geng2020tight}]\label{lem:vec_lap}
    Let privacy parameters~$\epsilon>0$ and~$\delta\in(0, \frac{1}{2}]$ and sensitivity~$\Delta_{1}b$ be given.
    The Truncated Laplace Mechanism takes a function of sensitive data~$f(y)\in\mathbb{R}^{m}$ as input and outputs a private approximation of~$f(y)$, denoted~$\tilde{f}(y) = f(y) + z \in \mathbb{R}^{m}$, where~$z_{i} \in \mathcal{S}_b$, with~$\mathcal{S}_b := [-s_b, s_b]$, and ${z_{i}\sim\mathcal{L}_T(\sigma_b, \mathcal{S}_b)}$ for all~$i\in[m]$.
     The multivariate truncated Laplace mechanism is~$(\epsilon, \delta)$-differentially private if~$\sigma_b\geq \frac{\Delta_{1}b}{\epsilon}$ and
        $s_b = \frac{\Delta_{1}b}{\epsilon}\log\left(\frac{m(\exp(\epsilon)-1)}{\delta}+1\right) $. 
        
\end{lemma}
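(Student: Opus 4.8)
The plan is to reduce the guarantee to a coordinate-wise comparison of scalar truncated Laplace densities and then recombine the $m$ coordinates. Fix adjacent data and write $\mu = f(y)$, $\mu' = f(y')$, so that Definition~\ref{def:l1_sens} gives $\norm{\mu - \mu'}_1 \le \Delta_1 b$, and in particular $|d_i| := |\mu_i - \mu'_i| \le \Delta_1 b$ for every $i \in [m]$. Since the noise coordinates $z_1, \dots, z_m$ are drawn independently, $\tilde f(y) = \mu + z$ has the product density $t \mapsto \prod_{i=1}^{m} \frac{1}{\zeta}\exp(-|t_i - \mu_i|/\sigma_b)$ on the box $\prod_{i=1}^m [\mu_i - s_b,\, \mu_i + s_b]$, and $\tilde f(y')$ has the same density centered instead at $\mu'$.

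First I would partition $\mathbb{R}^m$ into the overlap region, on which the densities centered at $\mu$ and at $\mu'$ are both strictly positive, and the boundary region, which is contained in $\bigcup_{i=1}^m\{t : t_i \notin [\mu'_i - s_b,\, \mu'_i + s_b]\}$ --- equivalently, the event that some coordinate $z_i$ lands within distance $|d_i|$ of an endpoint of $\mathcal{S}_b$. On the overlap region the likelihood ratio equals $\exp\!\big(\frac{1}{\sigma_b}\sum_{i=1}^m(|t_i - \mu'_i| - |t_i - \mu_i|)\big)$, the normalizing constants $\zeta$ cancelling; by the triangle inequality applied coordinate-wise this is at most $\exp(\norm{\mu-\mu'}_1/\sigma_b) \le \exp(\Delta_1 b/\sigma_b) \le e^\epsilon$, using $\sigma_b \ge \Delta_1 b/\epsilon$. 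The key point is that the $\ell_1$ structure of the sensitivity makes the product telescope to a single factor $e^\epsilon$ rather than $e^{m\epsilon}$.

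Next I would bound the probability that $\tilde f(y)$ falls in the boundary region. A union bound over $i\in[m]$, together with the crude estimate $|d_i| \le \Delta_1 b$, shows this probability is at most $m$ times the truncated Laplace mass of a length-$\Delta_1 b$ interval abutting the endpoint $s_b$; integrating $\frac{1}{\zeta}\exp(-t/\sigma_b)$ over that interval and substituting the value of $\zeta = \mathbb{P}(z_i \le s_b)$ shows that the prescribed $s_b = \frac{\Delta_1 b}{\epsilon}\log\!\big(\frac{m(\exp(\epsilon)-1)}{\delta}+1\big)$ forces this total to be at most $\delta$. Finally, for any Borel $T$ I would split $\mathbb{P}[\tilde f(y)\in T]$ into its mass on $T$ intersected with the overlap region --- which is at most $e^\epsilon\,\mathbb{P}[\tilde f(y')\in T]$ by the ratio bound --- plus its mass on the boundary region, which is at most $\delta$; this is exactly the inequality in Definition~\ref{def:dp}. (Alternatively, the statement is the $n=1$ instance of Lemma~\ref{lem:matrix_lap}, since $\norm{\cdot}_{1,1}$ collapses to $\norm{\cdot}_1$ and the factor $mn$ becomes $m$.) I expect the only genuinely delicate step to be the constant-chasing in the boundary-mass estimate that pins down the exact form of $s_b$; the overlap ratio bound and the union bound are routine.
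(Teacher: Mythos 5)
Your proposal is correct: the two-condition argument (density-ratio bound of $e^\epsilon$ on the overlap of the shifted supports via the reverse triangle inequality and the $\ell_1$ sensitivity, plus a union-bound estimate showing the prescribed $s_b$ forces the non-overlap mass below $\delta$, then the split of $\mathbb{P}[\tilde f(y)\in T]$) is exactly the structure the paper uses in its proof of the matrix-variate analogue (Lemma~\ref{lem:matrix_lap} in Technical Appendix~\ref{app:lap_mat}), of which, as you note, this lemma is the $n=1$ specialization. The paper itself does not re-derive this vector case and simply cites \cite{munoz2021private,geng2020tight}, so your argument matches the intended proof rather than deviating from it.
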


We apply Lemma~\ref{lem:vec_lap} to~$b(D)$ to obtain
\begin{equation}\label{eq:barB}
    \bar{b} = b(D)-s_b\ones^{m}+z_b
\end{equation}
as the privatized constraint vector, where~$z_b$ is the noise added to enforce privacy from Lemma~\ref{lem:vec_lap}. By subtracting~$s_b\ones^{m}$, we ensure that each entry in the constraint vector becomes smaller, thereby tightening each constraint to promote feasibility. 

\subsection{Privacy for~$c(D)$}
To enforce privacy for~$c(D)$, we use the standard Laplace mechanism, which we define next.
\begin{lemma}[Laplace Mechanism; \cite{dwork2014algorithmic}]\label{lem:lap_mech}
    Let $\Delta_1f>0$ and $\epsilon>0$ be given, and fix the adjacency relation from Definition \ref{def:adj2}. The Laplace mechanism takes sensitive data $f(y)\in\mathbb{R}^n$ as input and outputs private data
        $\tilde{f}(y)=f(y)+z$,
    where $z\sim \mathcal{L}(\sigma)$. The Laplace mechanism is $(\epsilon, 0)$-differentially private if
        $\sigma\geq \frac{\Delta_1 f}{\epsilon}.$
\end{lemma}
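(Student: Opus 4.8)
The plan is to establish the classical pointwise density–ratio bound that characterizes $(\epsilon,0)$-differential privacy for additive-noise mechanisms, following the argument of \cite{dwork2014algorithmic}. First I would write down the density of the output. Since $\tilde f(y) = f(y) + z$ with $z \sim \mathcal{L}(\sigma)$ and the $n$ coordinates of $z$ are i.i.d.\ zero-mean Laplace with scale $\sigma$, the density of $\tilde f(y)$ evaluated at a point $v \in \mathbb{R}^n$ is
$$p_y(v) = \prod_{i=1}^n \frac{1}{2\sigma}\exp\!\left(-\frac{|v_i - f(y)_i|}{\sigma}\right).$$

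Next, I would fix two adjacent inputs $y, y'$ (i.e.\ $\adj(y,y')=1$ in the sense of Definition~\ref{def:adj2}) and bound the ratio of densities:
$$\frac{p_y(v)}{p_{y'}(v)} = \prod_{i=1}^n \exp\!\left(\frac{|v_i - f(y')_i| - |v_i - f(y)_i|}{\sigma}\right).$$
Applying the reverse triangle inequality coordinatewise gives $|v_i - f(y')_i| - |v_i - f(y)_i| \le |f(y)_i - f(y')_i|$, so the exponent is at most $\tfrac{1}{\sigma}\sum_{i=1}^n |f(y)_i - f(y')_i| = \tfrac{1}{\sigma}\norm{f(y)-f(y')}_1 \le \tfrac{\Delta_1 f}{\sigma} \le \epsilon$, where the second inequality is Definition~\ref{def:l1_sens} and the last uses the hypothesis $\sigma \ge \Delta_1 f / \epsilon$. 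Hence $p_y(v) \le e^{\epsilon}\, p_{y'}(v)$ for every $v \in \mathbb{R}^n$.

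Finally, I would integrate this pointwise inequality over an arbitrary Borel set $T \subseteq \mathbb{R}^n$ to obtain $\prob{\tilde f(y) \in T} = \int_T p_y(v)\,dv \le e^{\epsilon}\int_T p_{y'}(v)\,dv = e^{\epsilon}\,\prob{\tilde f(y') \in T}$, which is exactly the bound in Definition~\ref{def:dp} with $\delta = 0$. I do not expect any real obstacle, since this is a textbook result; the only points requiring care are getting the sign right when invoking the reverse triangle inequality so that the bound is controlled by $\norm{f(y)-f(y')}_1$ (rather than a coordinatewise quantity), and observing that it is precisely the product structure of the i.i.d.\ Laplace noise that converts the per-coordinate estimates into an $\ell_1$ bound matching the $\ell_1$-sensitivity $\Delta_1 f$.
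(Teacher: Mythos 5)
Your proof is correct and is exactly the standard density-ratio argument that the paper relies on: the paper does not prove this lemma itself but cites \cite{dwork2014algorithmic}, where this is the canonical argument, and the same technique (per-coordinate density ratio bounded via the reverse triangle inequality and the $\ell_1$/$L_{1,1}$-sensitivity, then integrated over a measurable set) is what the paper uses for Condition 1 in its own proof of the matrix-variate truncated Laplace mechanism in Technical Appendix~\ref{app:lap_mat}. No gaps; the only cosmetic point is that your appeal to Definition~\ref{def:l1_sens} should be read with $y,y'$ playing the role of adjacent databases $D,D'$.
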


Similar to the \new{identically} zero entries of~$A(D)$, an \new{identically} zero, \new{or non-sensitive zero}, entry in~$c(D)$ encodes the fact that a decision variable does not impact the cost, and thus to preserve that structure we privatize only the \new{non-sensitive} zero elements in~$c(D)$. Let~$c(D)^0$ denote the vector of \new{sensitive} entries of~$c(D)$. To produce a private cost function, we compute
    $\tilde{c}^0 = c(D)^0+z_c$, 
where~$z_c\sim\mathcal{L}(\sigma_c)$ is the noise added using Lemma~\ref{lem:lap_mech} to enforce privacy. We then form~$\tilde{c}$ by replacing the \new{sensitive} entries in~$c(D)$ with the corresponding private 
entries of~$\tilde{c}^0$.
Since changing the cost function does not impact feasibility,~$\tilde{c}^0$ requires no post-processing and may be used as-is. 
\subsection{Guaranteeing Feasibility}
Along with privacy, 
we must also enforce feasibility. 
In order for the privately obtained solution~$\tilde{x}^*$ 
to satisfy the constraints of the non-private problem
(namely Problem~(P)), 
it is clear 
that the two problems must have
at least one feasible point in common. 
Ensuring that this is always true  
thus leads to the following assumption.

\begin{assumption}[Perturbed Feasibility]\label{ass:feast}
    The set~$S=\bigcap_{D\subseteq\dbset}\left\{x: A(D)x\leq b(D)\right\}$ is not empty.
\end{assumption}

In words, Assumption~\ref{ass:feast} says that there must exist at least
one point that satisfies the constraints produced by
every realization of the database~$D$. 
With Assumption~\ref{ass:feast}, we post-process~$\bar{A}$ from~\eqref{eq:barA}  and~$\bar{b}$ from~\eqref{eq:barB} 
according to
\begin{equation} \label{eq:abarpost}
\!\!\tilde{A}_{i,j} = \min\Big\{\bar{A}_{i,j}, \sup_{D \in \dbset} A(D)_{i,j} \Big\} \text{ for all } i\!\in\![m],\! j\!\in\![n]
\end{equation}
and
\begin{equation} \label{eq:bbarpost}
\tilde{b}_{i} = \max\Big\{\bar{b}_{i}, \inf_{D \in \dbset} b(D)_{i} \Big\} \text{ for all } i\in[m]. 
\end{equation}
For~$\tilde{A}_{i,j}$, we 
do so for each~$(i, j)$ such that~$A_{i,j}$ is non-zero.
For~$\tilde{b}_i$, we do so for all~$i$.
The outputs of these computations are the private
constraint coefficient matrix~$\tilde{A}$ and private constraint vector~$\tilde{b}$. 

\begin{remark}
    Taking the minimum in~\eqref{eq:abarpost} ensures that each entry in~$\tilde{A}$ appears in some~$A\in\seta$ and taking the maximum in~\eqref{eq:bbarpost} ensures that each entry in~$\tilde{b}$ appears in some~$b\in\setb$. 
    The supremum and infumum are finite since~$\dbset$ is bounded, and 
    computing them maintains privacy since~$\dbset$ 
    does not depend on sensitive information according to Assumption~\ref{ass:bounded}. 
\end{remark}

With this privacy implementation, we will solve the optimization problem
    \begin{equation}
        \begin{aligned}
        &\begin{aligned}
            \underset{x}{\operatorname{maximize}} &\quad \tilde{c}^Tx
        \end{aligned}
            \\
            &\begin{aligned}
                \operatorname{subject} \operatorname{to } \,\,\tilde{A}x&\leq \tilde{b}, \quad x\geq 0. 
            \end{aligned}
        \end{aligned}   
        \tag{DP-P}\label{opt:DP}
    \end{equation} 

Algorithm~\ref{algo:solve} provides a unified
overview of our approach. \new{Note that Problem~\eqref{opt:DP} may be solved via any algorithm, and thus Algorithm~\ref{algo:solve} does not introduce any additional computational complexity compared to solving Problem~\eqref{opt:primal}.}

\begin{algorithm}[t]
    \caption{Privately Solving Linear Programs}
    \label{algo:solve}
    \begin{algorithmic}[1]
    \STATE \textbf{Inputs}: Problem~\eqref{opt:primal}, $\epsilon$, $\delta$, $\Delta_{1,1}A$, $\Delta_1b$, $\Delta_1c$, $\alpha_A$, $\alpha_b$,\! $\alpha_c$ \\
    \STATE \textbf{Outputs}: Privacy-preserving solution $\tilde{x}^*$ \\
    \vspace{1mm}
    \STATE Set $\sigma_A = \frac{\Delta_{1,1}A}{\new{\alpha_A}\epsilon}$ \\
    \STATE Set $\sigma_b = \frac{\Delta_1b}{\new{\alpha_b}\epsilon}$ \\
    \STATE Set $\sigma_c = \frac{\Delta_1c}{\new{\alpha_c}\epsilon}$ \\
    \STATE Compute the support for the constraint coefficient matrix 
    $s_A = \frac{\Delta_{1,1}A}{\new{\alpha_A}\epsilon}\log\left(\frac{2nm(\exp(\new{\alpha_A}\epsilon)-1)}{\delta}+1\right)$ \\
    \STATE Compute the support for the constraint vector 
    $s_b = \frac{\Delta_{1}b}{\new{\alpha_b}\epsilon}\log\left(\frac{2m(\exp(\new{\alpha_b}\epsilon)-1)}{\delta}+1\right)$ \\
    \STATE Generate $\bar{A}$ using~\eqref{eq:barA}
    \\
    \STATE Generate $\bar{b}$ using~\eqref{eq:barB}
    \\
    \STATE Post-process~$\bar{A}$ using~\eqref{eq:abarpost} \\
        \STATE Post-process~$\bar{b}$ using~\eqref{eq:bbarpost} \\
    \STATE Compute~$\tilde{c}^0 = c(D)^0+z_c$\\
    \STATE Form $\tilde{c}$ by replacing each non-zero entry
    in~$c$ with its corresponding entry of~$\tilde{c}^0$\\
    \STATE Solve Problem~\eqref{opt:DP} (via any algorithm) to find~$\tilde{x}^*$
    \end{algorithmic}
\end{algorithm}
\begin{figure*}[t]
\begin{equation}\label{eq:rho}
    \rho = 
        \begin{cases}
            \bigg( 2m\left(\frac{\Delta_{1}b}{\new{\alpha_b}\epsilon}\right)^2 + ms_b^2 + 2s_b\chi\sum_{i=1}^m\left(n^0_i s_A\right) +\dualmax^2\left(2\left(\frac{\Delta_{1}b}{\new{\alpha_b}\epsilon}\right)^2 + s_b^2\right)+\\\chi^2\sum_{i=1}^m\left(2n^0_i\left(\frac{\Delta_{1,1}A}{\new{\alpha_A}\epsilon}\right)^2 + (n^0_i s_A)^2\right) +
         2n^{0, c} \left(\frac{\Delta_{1}c}{\new{\alpha_c}\epsilon}\right)^2 +\\ m\dualmax^2\sum_{j=1}^n \left(2m^0_j\left(\frac{\Delta_{1,1}A}{\new{\alpha_A}\epsilon}\right)^{\new{2}} + (m^0_j s_A)^2\right)+ 2\chi^2 n^{0, c} \left(\frac{\Delta_{1}c}{\new{\alpha_c}\epsilon}\right)^2  \bigg)^{\frac{1}{2}} & \parbox[t]{4.5cm}{\vspace{-1.7cm}if $\tilde{A}_{i,j} = A(D)_{i,j}+(s_A+Z_{i,j})\ind{A\neq 0}_{i,j}$ and $\tilde{b}_i = b(D)_i-s_b+z_{b_i}$ for all $i,j$}\\

         \norm{\begin{bmatrix}
            \sqrt{n}\|(A(D)-\hat{A})\|_F\chi \\
            \sqrt{m}\|(A(D)-\hat{A})^T\|_F\dualmax\\
             2\sqrt{n}\frac{\Delta_{1}c}{\new{\alpha_c}\epsilon}\chi
        \end{bmatrix}}_2
        + \norm{\begin{bmatrix}
            \|(b(D)-\hat{b})\|_2 \\
           2\frac{\Delta_{1}c}{\new{\alpha_c}\epsilon}\\
            \sqrt{m}\|b(D)-\hat{b}\|_2\dualmax
        \end{bmatrix}}_2
         & \text{otherwise}
        \end{cases}
    \end{equation}
\hrulefill
\end{figure*}
\begin{remark}\label{rem:modify}
    Algorithm~\ref{algo:solve} presents our approach in the case where every component of the LP depends on the sensitive database; however, one can amend Algorithm~\ref{algo:solve} if only a subset of these components depends on the sensitive database. For example, if only~$A(D)$ and~$c(D)$ depend on the database, and the constraint vector~$b$ does not, then one can omit steps 4, 7, 9, and 11 from Algorithm~\ref{algo:solve}, and \new{choose~$\alpha_A> 0$ and~$\alpha_c>0$ such that~$\alpha_A+\alpha_c=1$}. Doing so will yield a more accurate result while still guaranteeing privacy for the database-dependent quantities. 
\end{remark}

\subsection{Characterizing Privacy}
Next we prove that Algorithm~\ref{algo:solve} is differentially private. 

\begin{theorem}[Solution to Problem~$1$]\label{thm:privacy}
    Let privacy parameters~$\epsilon>0$ and~$\delta\in(0, \frac{1}{2}]$, sensitivities~$\Delta_{1,1}A$,~$\Delta_1b$, and~$\Delta_1c$\new{, and privacy budget allocations~$\alpha_i$ for~$i\in\{A, b, c\}$}
    be given. Let Assumptions~\ref{ass:slate},~\ref{ass:bounded}, and~\ref{ass:feast} hold. Then Algorithm~\ref{algo:solve} keeps the database~$D$ 
    $(\epsilon, \delta)$-differentially private.
\end{theorem}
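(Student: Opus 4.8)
The plan is to treat Algorithm~\ref{algo:solve} as the composition of three randomized primitives --- one perturbing $A(D)$, one perturbing $b(D)$, one perturbing $c(D)$ --- followed by a single deterministic map (the masking and constant shifts in \eqref{eq:barA} and \eqref{eq:barB}, the clamping in \eqref{eq:abarpost} and \eqref{eq:bbarpost}, the reassembly of $\tilde c$, and the LP solve that produces $\tilde x^*$). I would (i) show each primitive is differentially private with its share of the budget, (ii) apply sequential composition (Lemma~\ref{lem:comp}) to the joint release of the three perturbed objects, and (iii) apply immunity to post-processing (Lemma~\ref{lem:arbitrary}) to carry the guarantee to $\tilde x^*$.

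For step (i), apply Lemma~\ref{lem:matrix_lap} to the map $D\mapsto A(D)$ with parameters $(\alpha_A\epsilon,\alpha_A\delta)$ and sensitivity $\Delta_{1,1}A$: the choices $\sigma_A=\tfrac{\Delta_{1,1}A}{\alpha_A\epsilon}$ and the value of $s_A$ in Algorithm~\ref{algo:solve} meet that lemma's hypotheses, so $D\mapsto A(D)+Z$ is $(\alpha_A\epsilon,\alpha_A\delta)$-differentially private; since $\Delta_{1,1}A$ is a supremum over database-adjacent pairs, this is privacy with respect to the adjacency of Definition~\ref{def:adj2}, even though one entry of $D$ can move many entries of $A(D)$. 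Likewise, Lemma~\ref{lem:vec_lap} applied to $D\mapsto b(D)$ makes $D\mapsto b(D)+z_b$ $(\alpha_b\epsilon,\alpha_b\delta)$-differentially private, and Lemma~\ref{lem:lap_mech} applied to $D\mapsto c(D)^0$ with scale $\sigma_c=\tfrac{\Delta_1 c}{\alpha_c\epsilon}$ makes $D\mapsto c(D)^0+z_c$ $(\alpha_c\epsilon,0)$-differentially private, hence also $(\alpha_c\epsilon,\alpha_c\delta)$-differentially private because differential privacy is monotone in $\delta$ and $\alpha_c\delta\ge 0$. The side conditions hold: $\alpha_i\epsilon>0$ and $\alpha_A\delta,\alpha_b\delta\in(0,\tfrac12]$ since $\delta\in(0,\tfrac12]$ and $\alpha_i\in(0,1)$.

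For steps (ii)--(iii), concatenate the primitives into $\mathscr{M}(D)=\bigl(A(D)+Z,\ b(D)+z_b,\ c(D)^0+z_c\bigr)$; because $\alpha_A,\alpha_b,\alpha_c\ge 0$ and $\alpha_A+\alpha_b+\alpha_c=1$, Lemma~\ref{lem:comp} with $N=3$ makes $\mathscr{M}$ $(\epsilon,\delta)$-differentially private. It then remains to write $\tilde x^*=h(\mathscr{M}(D))$ for a map $h$ that does not look at $D$ again. The relevant facts are: $\bar A=\bigl(A(D)+Z\bigr)\circ\ind{A(D)\neq 0}+s_A\,\ind{A(D)\neq 0}$ and $\bar b=(b(D)+z_b)-s_b\ones^m$, so $\bar A$ and $\bar b$ are recovered from $\mathscr{M}(D)$ using only the support mask $\ind{A(D)\neq 0}$ and the constants $s_A,s_b$; the support mask and the ``non-sensitive'' index set of $c$ depend only on $\dbset$, not on the realized $D$; the clamp levels $\sup_{D\in\dbset}A(D)_{i,j}$ and $\inf_{D\in\dbset}b(D)_i$ are finite and public under Assumption~\ref{ass:bounded}; and the final LP solve is a deterministic function of $(\tilde A,\tilde b,\tilde c)$ (fixing a measurable rule for selecting among optimizers if needed). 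Composing all of these yields $h$, and Lemma~\ref{lem:arbitrary} gives $(\epsilon,\delta)$-differential privacy of $\tilde x^*$, which is the output of Algorithm~\ref{algo:solve}.

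The main obstacle is not a single estimate but the bookkeeping in step (iii): one must verify that \emph{every} operation Algorithm~\ref{algo:solve} performs after the noise draws depends on $D$ only through the already-privatized tuple $\mathscr{M}(D)$ and otherwise only through publicly available quantities --- the coordinate support patterns of $A$ and $c$ and the coordinate-wise bounds of Assumption~\ref{ass:bounded}. Note that Assumptions~\ref{ass:slate} and~\ref{ass:feast} are not needed for the privacy claim itself; they only guarantee that Problem~\eqref{opt:DP} is well posed so that the post-processing map $h$ is everywhere defined. A secondary point worth spelling out is reconciling the precise formulas for $\sigma_A,s_A$ (and $\sigma_b,s_b$) in Algorithm~\ref{algo:solve} with the thresholds in Lemmas~\ref{lem:matrix_lap} and~\ref{lem:vec_lap} under the budget split, i.e.\ checking that the chosen truncation windows are no smaller than what those lemmas require.
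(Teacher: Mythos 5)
Your proposal is correct and follows essentially the same route as the paper's proof: per-component privacy via Lemmas~\ref{lem:matrix_lap},~\ref{lem:vec_lap}, and~\ref{lem:lap_mech} with the budget split, then sequential composition (Lemma~\ref{lem:comp}), then immunity to post-processing (Lemma~\ref{lem:arbitrary}) for the masking, shifts, clamping, and LP solve. Your extra bookkeeping --- that the support masks and clamp levels are public (Assumption~\ref{ass:bounded}) and that Assumptions~\ref{ass:slate} and~\ref{ass:feast} are only needed for well-posedness, not privacy --- is a slightly more careful rendering of what the paper states tersely, not a different argument.
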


\begin{proof}
    See Technical Appendix~\ref{app:thm_privacy}
\end{proof}

Theorem~\ref{thm:privacy} allows us to privatize each component of the linear program individually to generate an overall~$(\epsilon, \delta)$-differentially private LP. The solution generated by solving~\eqref{opt:DP} then can be shared without harming privacy.

\begin{theorem}[Solution to Problem~2]\label{thm:feasible}
    Let privacy parameters~$\epsilon>0$ and~$\delta\in[0, \frac{1}{2})$ be given, and let Assumptions~\ref{ass:slate},~\ref{ass:bounded}, and~\ref{ass:feast} hold. Then Problem~\eqref{opt:DP} is guaranteed to have a solution, and that solution is guaranteed to satisfy the original, non-privatized constraints in Problem~\eqref{opt:primal}. 
\end{theorem}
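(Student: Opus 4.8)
The plan is to prove the claim in two parts: (i)~that Problem~\eqref{opt:DP} is a well-posed linear program, i.e.\ its feasible set is nonempty and the objective attains a maximum on it; and (ii)~that every point feasible for~\eqref{opt:DP} --- in particular the maximizer~$\tilde{x}^*$ --- also satisfies the constraints of~\eqref{opt:primal}. Part~(ii) is the substantive claim that resolves the open problem, and it collapses to a short monotonicity argument once the sign structure of the mechanism is recorded; part~(i) is essentially what Assumptions~\ref{ass:slate},~\ref{ass:bounded},~\ref{ass:feast} together with the post-processing steps~\eqref{eq:abarpost}--\eqref{eq:bbarpost} are designed to guarantee.

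I would dispatch part~(ii) first. The construction only ever tightens constraints. By~\eqref{eq:barA}, each sensitive entry of~$A(D)$ is increased by~$s_A+Z_{i,j}\ge 0$ (nonnegative since $Z_{i,j}\in\mathcal{S}_A=[-s_A,s_A]$) while non-sensitive entries are left unchanged, and~\eqref{eq:abarpost} only replaces an entry by its minimum with~$\sup_{D'\in\dbset}A(D')_{i,j}$, which is itself $\ge A(D)_{i,j}$; hence $\tilde{A}_{i,j}\ge A(D)_{i,j}$ for all~$i,j$. Symmetrically, \eqref{eq:barB} decreases each entry of~$b(D)$ by $s_b-z_{b,i}\ge 0$, and~\eqref{eq:bbarpost} only takes a maximum with $\inf_{D'\in\dbset}b(D')_i\le b(D)_i$, so $\tilde{b}_i\le b(D)_i$ for all~$i$ (the perturbed cost~$\tilde{c}$ is irrelevant here, since the objective does not affect feasibility). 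Now let~$x$ be any point feasible for~\eqref{opt:DP}, so $x\ge 0$ and $\tilde{A}x\le\tilde{b}$; because $\tilde{A}_{i,j}\ge A(D)_{i,j}$ and $x\ge 0$ we have $A(D)x\le\tilde{A}x$ entrywise, hence $A(D)x\le\tilde{A}x\le\tilde{b}\le b(D)$, and $x\ge 0$ by hypothesis, so $x\in\feasible(D)$ and $x$ satisfies all constraints of~\eqref{opt:primal}. This is precisely where casting~\eqref{opt:primal} with the explicit constraint~$x\ge 0$ (as justified in the Remark after~\eqref{eq:fd}) is used: without $x\ge 0$, comparing $A(D)x$ with $\tilde{A}x$ entrywise fails.

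For part~(i), write the feasible set as $\tilde{\feasible}=\{x\ge 0:\tilde{A}x\le\tilde{b}\}$, which is closed, so it suffices to show it is nonempty and that $\tilde{c}^{T}x$ is bounded above on it. For nonemptiness, the caps~\eqref{eq:abarpost}--\eqref{eq:bbarpost} also yield the reverse bounds $\tilde{A}_{i,j}\le\sup_{D'\in\dbset}A(D')_{i,j}$ and $\tilde{b}_i\ge\inf_{D'\in\dbset}b(D')_i$, both finite by Assumption~\ref{ass:bounded}; Assumption~\ref{ass:feast}, together with the nonnegativity built into~\eqref{opt:primal}, supplies a point $\hat{x}\ge 0$ feasible for every realization of the data, and for such~$\hat{x}$ one checks $\tilde{A}\hat{x}\le(\sup_{D'\in\dbset}A(D'))\hat{x}\le(\inf_{D'\in\dbset}b(D'))\le\tilde{b}$, so $\hat{x}\in\tilde{\feasible}$. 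For boundedness of the objective, part~(ii) gives $\tilde{\feasible}\subseteq\feasible(D)\cap\{x\ge 0\}$, so whenever the original feasible region is bounded --- the case of interest, and what one verifies when treating~\eqref{opt:primal} (under Assumption~\ref{ass:slate} and LP duality with~\eqref{opt:dual}) as attaining a finite optimum --- $\tilde{\feasible}$ is compact and the linear objective~$\tilde{c}^{T}x$ attains its supremum there. Hence a maximizer~$\tilde{x}^*$ exists, and by part~(ii) it lies in~$\feasible(D)$.

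The main obstacle is part~(i), not part~(ii): the mechanism \emph{shrinks} the feasible region and perturbs the cost by noise of unbounded support, so one must argue both that the region does not collapse to~$\emptyset$ and that the perturbed cost does not open an unbounded ascent direction. The first is handled by coupling Assumption~\ref{ass:feast} with the caps~\eqref{eq:abarpost}--\eqref{eq:bbarpost}, which pin $(\tilde{A},\tilde{b})$ between the entrywise extremes of the data so that the common feasible point survives the worst-case tightening; the second leans on $\tilde{\feasible}$ being contained in the original feasible region, and here I would be most careful, since one must rule out an unbounded original feasible region paired with a cost perturbation that creates an ascent direction (which may call for an explicit boundedness hypothesis or an argument that it holds for the problem class at hand). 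Tracking these two reductions, and invoking $x\ge 0$ exactly where the entrywise inequalities require it, is the real work; the monotonicity computation behind the original-constraint satisfaction is immediate.
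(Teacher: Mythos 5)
Your proposal follows essentially the same route as the paper's proof: the substantive half --- that any point feasible for \eqref{opt:DP} satisfies the original constraints --- is the same entrywise-tightening argument ($\tilde{A}_{i,j}\ge A(D)_{i,j}$, $\tilde{b}_i\le b(D)_i$, $x\ge 0$), and your chain $A(D)x\le\tilde{A}x\le\tilde{b}\le b(D)$ is correct and, if anything, stated more carefully than in the paper. The nonemptiness half is also the paper's argument in spirit: reduce to the worst-case system produced by the caps in \eqref{eq:abarpost}--\eqref{eq:bbarpost} and invoke Assumption~\ref{ass:feast}.

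Two caveats. First, the step $(\sup_{D'\in\dbset}A(D'))\hat{x}\le\inf_{D'\in\dbset}b(D')$ does not follow from Assumption~\ref{ass:feast} as literally stated: the entrywise supremum of $A$ and the entrywise infimum of $b$ need not be realized by any single $D'$, so a point feasible for every \emph{joint} realization $(A(D'),b(D'))$ need not satisfy the entrywise worst-case system. For instance, with one constraint and two realizations, $A(D_1)=[1\;\;0]$, $A(D_2)=[0\;\;1]$, $b(D_1)=b(D_2)=1$, the point $\hat{x}=(1,1)$ lies in $S$ yet $\hat{A}\hat{x}=2>1=\inf_{D'}b(D')_1$. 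To be fair, the paper's own proof makes the same identification (its claimed equality of $\bigcap_{D}\{x:A(D)x\le\tilde{b}\}$ with $\{x:\hat{A}x\le\tilde{b}\}$ hides the same exchange of supremum and sum), so you are consistent with the paper's reading of the assumption; but your ``one checks'' conceals a step that really needs the stronger, worst-case form of Assumption~\ref{ass:feast}, namely existence of a point with $\hat{A}x\le\check{b}$ where $\check{b}_i=\inf_{D'}b(D')_i$. Second, on attainment: the paper's proof stops at nonemptiness and never addresses boundedness of $\tilde{c}^Tx$ on the private feasible set, whereas you introduce an explicit boundedness hypothesis. The concern you raise is legitimate --- since $\tilde{c}$ carries Laplace noise with unbounded support, an unbounded original feasible region can create an ascent direction for $\tilde{c}$ even when \eqref{opt:primal} has a finite optimum --- but that extra hypothesis is not part of the theorem statement, and the paper's proof does not supply a substitute for it either.
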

\begin{proof}
    See \new{Technical} Appendix~\ref{app:thm_feasible}.
\end{proof}


Theorem~\ref{thm:feasible} guarantees that Algorithm~\ref{algo:solve} produces a feasible LP. Since all of the constraints are tightened by privacy, and the solution to Problem~\eqref{opt:DP} always exists, that solution is guaranteed to satisfy the original, non-private constraints. The conclusion of~\cite{munoz2021private} identifies the privatization of~$A(D)$ with guaranteed constraint satisfaction as an open problem, and thus Theorems~\ref{thm:privacy} and~\ref{thm:feasible} not only solve this open problem but present, to the best of the authors' knowledge, the only private linear programming approach which can simultaneously privatize~$A(D)$ and~$b(D)$ while guaranteeing satisfaction of the original, non-private constraints.

\section{Accuracy}
In this section, we solve Problem~\ref{prob:accurate}
and bound the expected sub-optimality that is induced by privacy. 
This bound depends on (i) the largest feasible solution and the largest possible norm of a dual variable, whether it is a solution or not, 
(ii) the realization of the LP components at the boundary of~$\dbset$, and (iii) the ``closeness" of the private and non-private optimization problems in a way that we make precise.

For (i) and (ii), we define the following quantities: 
\begin{align}
    \chi&= \max_{D\in\dbset, j\in [N]} \bar{x}(D)_j^* \label{eq:chi}\\
    \dualmax &= \max_{D\in\dbset}\frac{c(D)^T\eta - c(D)^T\omega}{\min\limits_{j\in[m]} -A(D)_j\eta+b(D)_j} \label{eq:dualmax}
    \end{align}
    \begin{align}
    \hat{A} &= \left[\sup_{D\in\dbset} A(D)_{i,j}\right]_{i\in[m], j\in[n]}\label{eq:ahat}\\
    \hat{b} &= \left[\sup_{D\in\dbset} b(D)_{i}\right]_{i\in[m]}, \label{eq:bhat}
\end{align}
    
    
    
    
\noindent where~$\eta$ is a solution to Problem~\eqref{opt:primal}
and~$\omega$ is a Slater point for Problem~\eqref{opt:primal}. 
For (iii), we 
use Corollary~$3.1$ from~\cite{robinson1973bounds}, which we state formally in the Technical Appendix, Section~\ref{sec:sup_lem} as Lemma~\ref{lem:perturbation}.
Lemma~\ref{lem:perturbation} then forms the basis for our accuracy result, which we state next.
\begin{theorem}[Solution to Problem~\ref{prob:accurate}]\label{thm:lp_accuracy_e}
    Fix privacy parameters~$\epsilon>0$ and~$\delta\in[0, \frac{1}{2})$, and let the sensitivities~$\Delta_{1,1}A$,~$\Delta_1b$, and~$\Delta_1c$, \new{and privacy budget allocations~$\alpha_i$ for~$i\in\{A, b, c\}$} be given. Let Assumptions~\ref{ass:slate},~\ref{ass:bounded}, and~\ref{ass:feast} hold. Let~$x^*$ be the solution to Problem~\eqref{opt:primal} and let~$\tilde{x}^*$ be the solution to Problem~\eqref{opt:DP}. 
    Let~$H(G, C)$ be the Hoffman constant, as defined in~\cite{robinson1973bounds}, associated with Problem~\eqref{opt:primal}. 
    Then
         $\Expectation{c(D)^Tx^*-c(D)^T\tilde{x}^*}\leq \norm{c(D)}_2H(G, C)\rho$, 
    where~$\rho$ is defined in~\eqref{eq:rho}. 
\end{theorem}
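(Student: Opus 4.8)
The plan is to bound the objective-value gap by the solution distance and then apply a perturbation bound for linear programs. By Cauchy--Schwarz, $c(D)^Tx^*-c(D)^T\tilde x^* = c(D)^T(x^*-\tilde x^*)\le \|c(D)\|_2\,\|x^*-\tilde x^*\|_2$, so it suffices to bound $\mathbb{E}\|x^*-\tilde x^*\|_2$. For this I would invoke Lemma~\ref{lem:perturbation} (Robinson's Corollary~3.1): since Problem~\eqref{opt:DP} is obtained from Problem~\eqref{opt:primal} by replacing the data $(A(D),b(D),c(D))$ with $(\tilde A,\tilde b,\tilde c)$, the optimal set moves by at most $H(G,C)$ times the size of the induced perturbation of the linear system characterizing primal--dual optimality, where $H(G,C)$ is the Hoffman constant. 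Combining this with Cauchy--Schwarz, the theorem follows once I show that the expected norm of that data perturbation is at most $\rho$.

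Second, I would write that data perturbation explicitly. Changing $A(D)\mapsto\tilde A$, $b(D)\mapsto\tilde b$, $c(D)\mapsto\tilde c$ perturbs the primal-feasibility rows through $(\tilde A-A(D))x$ and $\tilde b-b(D)$, the dual-feasibility rows through $(\tilde A-A(D))^T\mu$ and $\tilde c-c(D)$, and the objective-equality row through $\tilde c-c(D)$ and $\tilde b-b(D)$. Evaluating at an optimal primal point and an optimal dual point and bounding $\|x^*\|_\infty\le\chi$ and the dual-variable norm by $\dualmax$ via~\eqref{eq:chi}--\eqref{eq:dualmax}, and converting $\infty$-norm bounds into $2$-norm bounds (which supplies the $\sqrt n$ and $\sqrt m$ factors in~\eqref{eq:rho}), assembles the perturbation into a stacked vector whose squared $2$-norm is a sum, over the perturbed entries of $A$, $b$, and $c$, of entrywise squared perturbations weighted by $\chi^2$ in the primal block and by $\dualmax^2$ in the dual and objective blocks. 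The counts $n^0_i$, $m^0_j$ and $n^{0,c}$ in~\eqref{eq:rho} simply record which entries are actually perturbed, namely the non-identically-zero entries of row $i$ and of column $j$ of $A$, and of $c$.

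Third comes the expectation, which is where the two cases of~\eqref{eq:rho} arise. If the clips~\eqref{eq:abarpost}--\eqref{eq:bbarpost} are inactive, then by~\eqref{eq:barA}--\eqref{eq:barB} each touched entry of $A$ equals a deterministic shift $s_A$ plus an independent zero-mean truncated Laplace variable of variance at most $2(\Delta_{1,1}A/(\alpha_A\epsilon))^2$, each touched entry of $b$ equals $-s_b$ plus a truncated Laplace variable of variance at most $2(\Delta_1 b/(\alpha_b\epsilon))^2$, and each touched entry of $c$ is a zero-mean Laplace variable of variance $2(\Delta_1 c/(\alpha_c\epsilon))^2$; using Jensen's inequality, $\mathbb{E}\|x^*-\tilde x^*\|_2\le H(G,C)\,(\mathbb{E}\|\text{perturbation}\|_2^2)^{1/2}$, and by independence $\mathbb{E}\|\text{perturbation}\|_2^2$ equals the sum of (bias$^2$ $+$ variance) over the touched entries with the stated weights and counts, which is exactly the first branch of~\eqref{eq:rho}. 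If some clip is active, the noise becomes correlated with the data and this moment calculation fails, so I would bound the $A$- and $b$-perturbations deterministically instead: the shift-and-truncate step~\eqref{eq:barA} only increases entries of $A$ while the clip~\eqref{eq:abarpost} pulls them back toward $\hat A$, giving $A(D)_{i,j}\le\tilde A_{i,j}\le\hat A_{i,j}$ and hence $|\tilde A_{i,j}-A(D)_{i,j}|\le \hat A_{i,j}-A(D)_{i,j}$, and symmetrically $|\tilde b_i-b(D)_i|\le|b(D)_i-\hat b_i|$ after~\eqref{eq:barB}--\eqref{eq:bbarpost}; keeping the still-Laplace cost perturbation in expectation and stacking the blocks with a Minkowski inequality yields the ``otherwise'' branch in terms of $\hat A$ and $\hat b$ from~\eqref{eq:ahat}--\eqref{eq:bhat}.

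The main obstacle I anticipate is faithfully matching Robinson's abstract perturbation bound to the explicit bookkeeping in~\eqref{eq:rho}: pinning down exactly which norm of the perturbed optimality system the Hoffman constant multiplies, and routing the $A$-perturbation simultaneously through the primal block (weighted by $\chi$, with row counts $n^0_i$) and the dual block (weighted by $\dualmax$, with column counts $m^0_j$) so the variance sums emerge with the correct structure. A secondary difficulty is the clipping case, where $\tilde A$ and $\tilde b$ depend on the noise and the data non-additively, so the expectation must be replaced by the deterministic worst case and $\rho$ is necessarily piecewise; a minor point is justifying the variance bound $2\sigma^2$ for the truncated Laplace, which holds because truncating to a symmetric interval only reduces variance.
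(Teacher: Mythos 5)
Your proposal follows essentially the same route as the paper's proof: Cauchy--Schwarz, concatenation of the primal--dual solutions, Robinson's perturbation bound with the Hoffman constant, Jensen's inequality, and the same two-case split on whether the clipping in~\eqref{eq:abarpost}--\eqref{eq:bbarpost} is active, with the~$\chi$ and~$\dualmax$ bounds and truncated-Laplace moments in the first case and deterministic~$\hat{A},\hat{b}$ bounds plus the Laplace cost moments in the second. The only deviation is bookkeeping detail: the first branch of~\eqref{eq:rho} also contains cross terms (e.g., $2 s_b \chi \sum_i n^0_i s_A$) and row/column-aggregated bias terms~$(n^0_i s_A)^2$, $(m^0_j s_A)^2$ rather than a purely entrywise bias-plus-variance sum, but this lies within the same calculation you outline.
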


\begin{proof}
See \new{Technical} Appendix~\ref{app:accuracy}.    
\end{proof}

\begin{remark}
    Given an LP in the form of Problem~\eqref{opt:primal}, the Hoffman constant~$H(G, C)$ is well-defined and always exists. 
    Computing exact Hoffman constants is known to be NP-Hard~\cite{pena2018algorithm}, though a variety of upper bounds
    and efficient approximation algorithms for them exist, and any one of
    them can be used in conjunction with Theorem~\ref{thm:lp_accuracy_e}. 
    A full exposition is beyond the scope of this article, and we refer the reader to~\cite{pena2018algorithm,pena2021new,hoffman1952approximate} and references therein for an extended discussion.
\end{remark}

The accuracy guarantee of Theorem~\ref{thm:lp_accuracy_e} enables users to trade off the worst-case \new{average} sub-optimality induced by privacy in order to design the parameters~$\epsilon$ and~$\delta$. \new{Additionally, we find that the order of~$\rho$ is~$\mathcal{O}(\log(\frac{1}{\delta})\epsilon^{-\frac{1}{2}})$ in terms of the privacy parameters and~$\mathcal{O}(nm)$ in terms of the problem parameters, which implies linear growth of the error in terms of the privacy parameters in the worst case. However, in practice, we find virtually no increase in error with increasing problem size, which is shown empirically in Section~\ref{sec:sim}}. \new{Next, we bound the magnitude of the fluctuations around the mean.}
\begin{theorem}\label{cor:concentration}
            Let the conditions of Theorem 4.1 hold. Let~$R = \norm{x^*-\tilde{x}^*}_2$. Then,
            \begin{equation}
             \mathbb{P}\left(R - \Expectation{R}\geq \diam(\mathcal{F}(D))\sqrt{\log(1/t)/2}\right)\geq 1-t.
            \end{equation}
\end{theorem}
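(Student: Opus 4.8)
The plan is to observe that $R=\norm{x^*-\tilde{x}^*}_2$ is a bounded random variable and then apply the standard sub-Gaussian tail bound for bounded random variables (Hoeffding's lemma, equivalently the single-coordinate case of McDiarmid's bounded-differences inequality); no property of the privacy noise beyond boundedness of $R$ is used. The first step is to pin down the range of $R$. The hypotheses of Theorem~\ref{thm:lp_accuracy_e} include Assumptions~\ref{ass:slate},~\ref{ass:bounded}, and~\ref{ass:feast}, so Theorem~\ref{thm:feasible} applies: Problem~\eqref{opt:DP} has a solution $\tilde{x}^*$, and that solution is feasible for Problem~\eqref{opt:primal}, i.e.\ $\tilde{x}^*\in\feasible(D)$; of course $x^*\in\feasible(D)$ as well. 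Hence, for every realization of the privacy noise,
\[
0\;\le\;R\;=\;\norm{x^*-\tilde{x}^*}_2\;\le\;\sup_{u,v\in\feasible(D)}\norm{u-v}_2\;=\;\diam(\feasible(D)),
\]
so $R$ lies almost surely in an interval of width $\diam(\feasible(D))$; the bound is vacuous when this diameter is infinite, so we may assume it finite.

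Next I would center the variable and invoke Hoeffding's lemma. The quantity $R-\Expectation{R}$ has mean zero and lies almost surely in the interval $[-\Expectation{R},\,\diam(\feasible(D))-\Expectation{R}]$, which has width exactly $\diam(\feasible(D))$, so its moment generating function obeys $\Expectation{e^{\lambda(R-\Expectation{R})}}\le\exp\!\big(\lambda^2\diam(\feasible(D))^2/8\big)$ for all $\lambda>0$. A Chernoff bound followed by optimization over $\lambda$ then gives
\[
\mathbb{P}\!\left(R-\Expectation{R}\ge u\right)\;\le\;\exp\!\left(-\frac{2u^2}{\diam(\feasible(D))^2}\right)\qquad\text{for all }u\ge 0,
\]
and choosing $u=\diam(\feasible(D))\sqrt{\log(1/t)/2}$ makes the right-hand side equal to $t$; taking complements yields the stated inequality (read with ``$\le$'' inside the probability).

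I do not expect a genuine obstacle here: the proof reduces to the deterministic estimate $R\le\diam(\feasible(D))$ together with a textbook concentration inequality, and in particular no distributional structure of the truncated-Laplace and Laplace noise is needed. The one point requiring care is the appeal to Theorem~\ref{thm:feasible} to guarantee that $\tilde{x}^*$ both exists and lies in $\feasible(D)$ — it is precisely the feasibility of $\tilde{x}^*$ that allows $\diam(\feasible(D))$ to control $R$ — and the tacit requirement that $\feasible(D)$ have finite diameter, without which the bound conveys no information.
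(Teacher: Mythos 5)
Your proposal is correct and follows essentially the same route as the paper's proof: both arguments reduce the claim to the deterministic bound $R\in[0,\diam(\mathcal{F}(D))]$ (you justify it via Theorem~\ref{thm:feasible}, the paper via the observation that the privatized feasible set is contained in $\mathcal{F}(D)$, which is the same fact) and then apply the bounded-random-variable Hoeffding bound, the paper using the sum form with $n=1$ and you using Hoeffding's lemma plus a Chernoff argument. The one substantive difference is in your favor: you derive the correct tail bound $\mathbb{P}\left(R-\Expectation{R}\geq u\right)\leq t$ and note that the displayed statement must be read with ``$\leq$'' inside the probability, whereas the paper starts from a Hoeffding inequality written with the deviation direction reversed and consequently lands on the literal (mis-signed) statement $\mathbb{P}\left(R-\Expectation{R}\geq \nu\right)\geq 1-t$, which is not what concentration gives; your corrected reading is the right one.
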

\begin{proof}
See Technical Appendix~\ref{app:concentration}.  
\end{proof}
\new{Theorem~\ref{cor:concentration} indicates that the fluctuations of the error about the mean error depends on the size of the original, non-private feasible region.}
\section{Numerical Simulation}\label{sec:sim}
In this section, we present simulations
on the internet advertising setting described in~\cite{munoz2021private}, which we restate here for completeness. \new{See Technical Appendix~\ref{app:cmdp} for additional simulations.} In this setting, the pages of a website are partitioned into~$N$ groups,
and group~$i \in [N]$ receives~$n_i$ unique visitors. 
The database~$D$ is the confidential business information from advertisers, such as market research on the products being advertised.
For a group of~$M$ advertisers, for each~$j\in[M]$, advertiser~$j$ lists a price~$p_{ij}(D)$ that they are willing to pay per unique visit, and a budget~$b(D)_j$ that that they are willing to spend on advertising. 
This setting yields the optimization problem
\begin{equation}
        \begin{aligned}
        &\begin{aligned}
            \underset{x\geq 0}{\operatorname{maximize}} &\quad \sum_{i\in[N]}\sum_{j\in[M]}p_{ij}(D)x_{ij}
        \end{aligned}
            \\
            &\begin{aligned}
                \operatorname{subject} \operatorname{to } \,\,&\sum_{j\in[M]} x_{ij}\leq n_i \quad \text{for }i\in[N]\\
                &\sum_{i\in[N]} p_{ij}(D) x_{ij} \leq b(D)_j \quad \text{for }j\in[M].
            \end{aligned}
        \end{aligned}   
        \tag{EX1}\label{opt:EX1}
    \end{equation} 
We consider two scenarios: (i) where only~$p_{ij}(D)$ requires privacy for all~$i,j$ and (ii) where both~$p_{ij}(D)$ and~$b(D)$ require privacy. \new{For both scenarios,~$\alpha_i = 1/3$ for all~$i$. See Technical Appendix~\ref{subsec:priv_budget} for results with varying~$\alpha_i$.}

For case (i), we utilize Algorithm~\ref{algo:solve} modified in the manner detailed in Remark~\ref{rem:modify} to provide privacy for just~$p_{ij}(D)$, and we compare the quality of the solution (i.e., the loss in revenue) and the fraction of the constraints violated (i.e., how often advertisers go over budget) to that of~\cite{hsu2014privately}. We note that~\cite{hsu2014privately} has known issues with privacy leakages, as pointed out in~\cite{munoz2021private}. 
Specifically,~\cite{hsu2014privately} recommends scaling the problem by the~$\ell_1$ norm of the optimal solution, though doing so will alter the sensitivity of the problem, and no analysis is provided on how to compute the scaled problem's sensitivity. Additionally, the multiplicative weights algorithm used by~\cite{hsu2014differential} in problems with private constraints, such as in their Algorithm 5, may fail to converge in practice when noise draws with strong privacy are too large. Knowing these issues,~\cite{hsu2014privately} still presents the closest work to ours. 

In case (i), we analyze sub-optimality, \new{$\Expectation{(c(D)^Tx^*-c(D)^T\tilde{x}^*)/(c(D)^Tx^*)}$} under varying levels of privacy. We set~$N = 10$ and~$M = 5$. Each~$p_{ij}(D)$ is~$0$ with probability~$0.2$ and is drawn uniformly from~$[0, 1]$ with probability~$0.8$. We also set~$b_i = 10^7$ for all~$i\in[N]$ and~$n_j = 10^7$ for all~$j\in[M]$. The performance loss for case (i) is shown in Figure~\ref{fig:example_1} for~$\epsilon \in [\new{0.25}, 2]$ and~$\delta = 0.1$. Our work maintains zero constraint violation, as guaranteed by Theorem~\ref{thm:feasible}, while~\cite{hsu2014privately}
violates constraints at every value of~$\epsilon$, \new{up to~$51\%$ of constraints at~$\epsilon = 0.25$}.

For case (ii), we use Algorithm~\ref{algo:solve} without any modifications \new{using the same problem parameters as case (i)}. There is no other work to the authors' knowledge that can keep both~$p_{ij}(D)$ and~$b(D)$ private simultaneously. 
\new{However, we still present comparisons to~\cite{munoz2021private}. The work in~\cite{munoz2021private} can only keep~$b(D)$ private in the constraints, which means that it leaks private information about~$p_{ij}(D)$ by not keeping~$A(D)$ private, 
but we include this comparison to quantify how performance is affected by providing privacy to~$A(D)$ in addition to~$c(D)$ and~$b(D)$.}

\begin{figure}
    \centering
        \input{Figures/performance_hsu}
            
    \caption{
    Performance loss 
    with varying privacy strength. 
    Combining Algorithm 5 with a privatized objective from~\cite{hsu2014privately} leads to constraint violation for all~$\epsilon\in[0.25, 2]$.
    High constraint violation allows the solution to give the appearance of superior performance; however, such a solution leads to significant violation of some advertisers' budgets, which is unacceptable. Even when allowing this constraint violation, the solution produced by~\cite{hsu2014privately} still yields worse performance than that of Remark~\ref{rem:modify} and Algorithm~\ref{algo:solve}. 
    We also compare to~\cite{munoz2021private}, and we emphasize that Munoz incurs lower sub-optimality
    because it privatizes only~$b(D)$ in the constraints, while Algorithm~1 is used to privatize both~$A(D)$ and~$b(D)$.
    The approach in~\cite{munoz2021private} only incurs~$0.5\%$ sub-optimality at~$\epsilon = 2$,
    while the approach in Algorithm~1 incurs roughly~$20$\% sub-optimality, which indicates
    that privacy for~$A(D)$ induces~$19.5\%$ additional sub-optimality. 
    }
    \label{fig:example_1}
\end{figure} 


The performance loss for case (ii) is shown in Figure~\ref{fig:example_1}. In case~(ii), 
when more quantities are kept private, i.e., both~$p_{ij}(D)$ and~$b(D)$, Algorithm~\ref{algo:solve} still out-performs~\cite{hsu2014privately}, which highlights our method's improvement over the state of the art. In addition, we see at most a~$6\%$ difference in the performance of our method 
between cases (i) and (ii) (which occurs when~$\epsilon = 2$), indicating only minor performance loss with additional private quantities in an LP. Similarly, we find a~$20\%$ increase in sub-optimality between Algorithm~\ref{algo:solve} and~\cite{munoz2021private} at~$\epsilon = 2$, indicating modest increases in sup-optimality when privatizing both~$p_{ij}(D)$ and~$b(D)$ in the constraints as opposed to only~$b(D)$.




Next, we analyze how performance varies with increasing problem size, 
namely increasing~$M$ 
shown in Figure~\ref{fig:example_2}. \new{We note that increasing~$M$ increases both the number of variables and the number of sensitive constraints.} Since we have shown that our mechanism never violates the constraints while the work in~\cite{hsu2014privately} does, we instead focus our comparisons on performance. We fix~$\epsilon = 1$,~$\delta = 0.1$,~$N = 20$, and a randomly drawn~$p_{ij}(D)$, where~$p_{ij}(D)$ is~$0$ with probability~$0.2$ and is drawn uniformly from~$[0, 1]$ with probability~$0.8$. 
We simulate \new{100} samples for each~\new{$M\in\{5, 6, \ldots, 100\}$}. 
Since Algorithm~5 in~\cite{hsu2014privately} runs for more iterations on problems with more decision variables,
they see an increase in solution quality with increasing~$M$, though in exchange for a significant increase in computation time. We see only a~$10\%$ increase sub-optimality for a~$10$ fold increase in~$M$ without any change in computational complexity due to privacy, highlighting the scalability of our work.
\new{Additionally when privatizing~$b(D)$ with increasing~$M$, the performance of Algorithm~1 is 
virtually identical to that of the method from~\cite{munoz2021private}, 
which highlights both methods' ability to perform at scale.}

\begin{figure}
    \centering
            \input{Figures/performance_size_N}
            
    \caption{Performance loss with varying~$M$. As the number of variables increases, Algorithm~5 in~\cite{hsu2014privately} allows their solver to run for more iterations, leading to improvement in accuracy, though this leads to a dramatic increase in computation time. In Algorithm~\ref{algo:solve}, we see only an~$11\%$ decrease in optimal revenue with a~$10\times$ increase in problem size, 
    going from~$13.3\%$ sub-optimality with~$M = 10$ to~$24\%$ sub-optimality at~$M=100$. Performance remains roughly constant with increasing number of constraints for Algorithm~\ref{algo:solve} \new{and~\cite{munoz2021private}}, while Algorithm~5 in~\cite{hsu2014privately} steadily improves in performance but still has much higher sub-optimality.
    }
    \label{fig:example_2}
\end{figure}

\section{Conclusion}
We presented a method for simultaneously keeping the constraints and costs private in linear programming. We showed that this method is differentially private and that it always produces a feasible solution with respect to the original, non-private constraints. Future work will focus on guaranteed constraint satisfaction while privatizing nonlinear and stochastic constraints.
\section*{Acknowledgements}
This work was partially supported by AFRL under grant FA8651-23-F-A008, 
NSF under CAREER grant 2422260 and Graduate Research Fellowship under grant DGE-2039655, ONR under grant N00014-24-1-2432, and AFOSR under grant FA9550-19-1-0169. Any opinions, findings and conclusions or recommendations expressed in this material are those of the authors and do not necessarily reflect the views of sponsoring agencies.

\bibliography{main}

\section*{Checklist}

\new{
\begin{enumerate}
\item This paper:
\begin{enumerate}
    \item Includes a conceptual outline and/or pseudocode description of AI methods introduced (NA)
    \item Clearly delineates statements that are opinions, hypothesis, and speculation from objective facts and results (yes)
    \item Provides well marked pedagogical references for less-familiare readers to gain background necessary to replicate the paper (yes)
\end{enumerate}
\item Does this paper make theoretical contributions? (yes)
\begin{enumerate}
    \item All assumptions and restrictions are stated clearly and formally. (yes)
    \item All novel claims are stated formally (e.g., in theorem statements). (yes)
    \item Proofs of all novel claims are included. (yes)
    \item Proof sketches or intuitions are given for complex and/or novel results. (yes)
    \item Appropriate citations to theoretical tools used are given. (yes)
    \item All theoretical claims are demonstrated empirically to hold. (partial)
    \item All experimental code used to eliminate or disprove claims is included. (NA)
\end{enumerate}
\item Does this paper rely on one or more datasets? (no)
\item Does this paper include computational experiments? (yes). \\
See Technical Appendix~\ref{ap:checklist} for explanations the following questions.
\begin{enumerate}
    \item This paper states the number and range of values tried per (hyper-) parameter during development of the paper, along with the criterion used for selecting the final parameter setting. (NA)
    \item Any code required for pre-processing data is included in the appendix. (no)
    \item All source code required for conducting and analyzing the experiments is included in a code appendix. (no)
    \item All source code required for conducting and analyzing the experiments will be made publicly available upon publication of the paper with a license that allows free usage for research purposes. (no)
    \item All source code implementing new methods have comments detailing the implementation, with references to the paper where each step comes from (no)
    \item If an algorithm depends on randomness, then the method used for setting seeds is described in a way sufficient to allow replication of results. (no)
    \item This paper specifies the computing infrastructure used for running experiments (hardware and software), including GPU/CPU models; amount of memory; operating system; names and versions of relevant software libraries and frameworks. (no)
    \item This paper formally describes evaluation metrics used and explains the motivation for choosing these metrics. (yes)
    \item This paper states the number of algorithm runs used to compute each reported result. (yes)
    \item Analysis of experiments goes beyond single-dimensional summaries of performance (e.g., average; median) to include measures of variation, confidence, or other distributional information. (no)
    \item The significance of any improvement or decrease in performance is judged using appropriate statistical tests (e.g., Wilcoxon signed-rank). (no)
    \item This paper lists all final (hyper-)parameters used for each model/algorithm in the paper’s experiments. (yes)
\end{enumerate}
\end{enumerate}
}

\newpage 
\onecolumn
\appendix 
\section{Hoffman Constant Bound}\label{sec:sup_lem}

The following lemma will make reference to the dual problem to Problem~\eqref{opt:DP}. For completeness, we explicitly write it out here:
   \begin{equation}
        \begin{aligned}
        &\begin{aligned}
            \underset{\dual}{\operatorname{minimize}} &\quad \mu^T\tilde{b}
        \end{aligned}
            \\
            &\begin{aligned}
                \operatorname{subject} \operatorname{to } \,\,\mu^T\tilde{A}&\leq \tilde{c}^T, \quad \mu\geq 0. 
            \end{aligned}
        \end{aligned}   
        \tag{DP-DUAL}\label{opt:DPD}
    \end{equation} 
    \begin{lemma}[Perturbation Bound; \cite{robinson1973bounds}]\label{lem:perturbation}
    Let Problems~\eqref{opt:primal} and~\eqref{opt:dual} be given. 
    Let~$x^*$ be the solution to Problem~\eqref{opt:primal},
    let~$\mu^*$ be the solution to Problem~\eqref{opt:dual},
    let~$\tilde{x}^*$ be the solution to Problem~\eqref{opt:DP}, and
    let~$\tilde{\mu}^*$ be the solution to Problem~\eqref{opt:DPD}. 
    Let~$G\in\mathbb{R}^{m\times n}$ and~$C\in\mathbb{R}^{p\times n}$ be defined as 
    \begin{equation}
        G = \begin{bmatrix}
         -A(D) & 0\\
         0 & A(D)^T\\
         -I & 0\\
         0 & -I
        \end{bmatrix}, \quad C = \begin{bmatrix}
            c(D)^T & -b(D)^T
        \end{bmatrix}.
    \end{equation}
    Then
    \begin{equation}\label{eq:hoffman_ineq}
        \norm{\begin{bmatrix}
            x^*\\ \dual^*
        \end{bmatrix} - \begin{bmatrix}
            \tilde{x}^*\\ \tilde{\dual}^*
        \end{bmatrix}}_2\leq 
        H_{2,2}(G, C)\norm{\begin{bmatrix}
            [(A-\tilde{A})\tilde{x}^* - (b-\tilde{b})]^{-}\\ [(A-\tilde{A})^T\tilde{\dual}^* - (c-\tilde{c})]^{+}\\
            (c-\tilde{c})\tilde{x}^* - (b-\tilde{b})\tilde{\dual}^*
        \end{bmatrix}}_2,
    \end{equation}
    where $[\cdot]^+$ is the projection onto the non-negative orthant
    of~$\mathbb{R}^m$,~$[\cdot]^-$ is the projection onto the non-positive orthant of~$\mathbb{R}^m$,
    and~$H_{2, 2}(G, C)$ is the Hoffman constant of~$G$ and~$C$, defined as the smallest constant which makes~\eqref{eq:hoffman_ineq} true. 
   
\end{lemma}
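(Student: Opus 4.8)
The plan is to obtain this inequality as a specialization of the classical Hoffman/Robinson error bound for perturbed polyhedral systems --- indeed it is Corollary~3.1 of~\cite{robinson1973bounds} transcribed into the present notation --- so the real content lies in choosing the right linear system and reading off the residuals. First I would express primal--dual optimality as a single system of linear (in)equalities. By Assumption~\ref{ass:slate} Slater's condition holds, so strong duality applies and a pair $z=(x,\mu)$ consists of an optimal solution of Problem~\eqref{opt:primal} together with an optimal solution of Problem~\eqref{opt:dual} if and only if $z$ satisfies primal feasibility ($A(D)x\le b(D)$, $x\ge 0$), dual feasibility (the constraints of Problem~\eqref{opt:dual}), and a zero duality gap ($c(D)^Tx=b(D)^T\mu$; by weak duality the one-sided inequality already suffices). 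Collecting the coefficient rows of this system is precisely what $G$ and $C$ do: the primal--dual optimal set of Problem~\eqref{opt:primal} is a polyhedron $P(D)=\{z:Gz\le h(D),\ Cz=0\}$ with right-hand side $h(D)$ assembled from $-b(D)$, $c(D)$ and zeros, and likewise the optimal set of Problem~\eqref{opt:DP} is $\tilde P=\{z:\tilde Gz\le\tilde h,\ \tilde Cz=0\}$ built from $\tilde A,\tilde b,\tilde c$. By construction $(x^*,\mu^*)\in P(D)$ and $(\tilde x^*,\tilde\mu^*)\in\tilde P$.

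Next I would invoke Hoffman's lemma for the \emph{fixed} matrix pair $(G,C)$: there is a finite constant $H_{2,2}(G,C)$ --- finite because $(G,C)$ is a fixed matrix and $P(D)\neq\emptyset$ under Assumption~\ref{ass:slate} --- such that
\begin{equation}
\operatorname{dist}_2\big(z,P(D)\big)\ \le\ H_{2,2}(G,C)\,\big\|r(z)\big\|_2\qquad\text{for all }z,
\end{equation}
where $r(z)$ stacks the violations $[Gz-h(D)]^{+}$ and $Cz$ of the defining system of $P(D)$ at $z$. I would then apply this at $z=(\tilde x^*,\tilde\mu^*)$ and use that $\tilde z^*$ satisfies the \emph{perturbed} system: row by row, the violation $[(Gz)_i-h(D)_i]^{+}$ is dominated componentwise by $[((G-\tilde G)\tilde z^*)_i+(\tilde h_i-h(D)_i)]^{+}$, i.e.\ by the data differences alone (the tilde part contributes nothing). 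Substituting $A=\tilde A+(A-\tilde A)$, $b=\tilde b+(b-\tilde b)$, $c=\tilde c+(c-\tilde c)$ and simplifying, the primal block of this bound becomes $[(A-\tilde A)\tilde x^*-(b-\tilde b)]^{-}$, the dual block becomes $[(A-\tilde A)^T\tilde\mu^*-(c-\tilde c)]^{+}$, and the duality-gap entry becomes $(c-\tilde c)\tilde x^*-(b-\tilde b)\tilde\mu^*$ --- exactly the three blocks in~\eqref{eq:hoffman_ineq}. Taking $(x^*,\mu^*)$ to be the point of $P(D)$ nearest $\tilde z^*$ (equivalently the unique optimal pair, when Problem~\eqref{opt:primal} has one) turns $\operatorname{dist}_2$ into $\big\|(x^*,\mu^*)-(\tilde x^*,\tilde\mu^*)\big\|_2$, which is the claimed bound.

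The step I expect to be the main obstacle is the residual bookkeeping just described: the right-hand side $h(D)$ of the $G$-block is itself data-dependent (its entries carry $-b(D)$ and $c(D)$), so plugging $\tilde z^*$ into the original system produces residuals that must pick up \emph{both} the matrix perturbations $(A-\tilde A)$, $(A-\tilde A)^T$ and the vector perturbations $(b-\tilde b)$, $(c-\tilde c)$ in exactly the stated combinations, and one must orient every inequality consistently so that each one-sided projection lands on the right orthant (the $[\cdot]^-$ on the primal rows, $[\cdot]^+$ on the dual rows, none on the equality row). A secondary point worth stating explicitly is the meaning of ``the solution'': Robinson's bound yields \emph{some} optimal pair of Problem~\eqref{opt:primal} within the stated distance, so either uniqueness of the primal--dual optimum should be assumed or the conclusion read as the existence of a nearby optimal pair. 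Everything else is a direct appeal to~\cite{robinson1973bounds}.
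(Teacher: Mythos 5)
Your proposal is correct and matches the paper's treatment: the paper does not prove Lemma~\ref{lem:perturbation} at all but states it as Corollary~3.1 of \cite{robinson1973bounds} transcribed into its notation, which is exactly the specialization you carry out (primal--dual optimality as one polyhedral system, Hoffman/Robinson error bound for the fixed pair $(G,C)$, residuals at the perturbed solution bounded by the data differences). Your caveats about orienting the one-sided projections and about reading ``the solution'' as a nearest optimal pair when the optimum is not unique are reasonable refinements that the paper itself leaves implicit.
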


\section{Omitted Proofs}
\subsection{Proof of Lemma~\ref{lem:matrix_lap}}\label{app:lap_mat}
The Matrix-Variate Truncated Laplace Mechanism is~$(\epsilon, \delta)$-differentially private if the following two conditions hold:
    \begin{enumerate}
        \item Let~$U\in[A(D)-s_A\ones^{m\times n}, A(D)+s_A\ones^{m\times n}]\cap [A(D')-s_A\ones^{m\times n}, A(D')+s_A\ones^{m\times n}]$ be the matrices in the support of the Matrix-Variate Truncated Laplace Mechanism applied to~$A(D)$ and~$A(D')$. Let~$f_D(u)$ and~$f_{D'}(u)$ be the resulting probability density functions from applying the Matrix-Variate Truncated Laplace Mechanism to~$A(D)$ and~$A(D')$, respectively. Then~$f_D(u)\leq e^{\epsilon}f_{D'}(u)$. 
        \item Let~$V\in[A(D)-s_A\ones^{m\times n}, A(D)+s_A\ones^{m\times n}]\setminus [A(D')-s_A\ones^{m\times n}, A(D')+s_A\ones^{m\times n}]$ be the matrices in the support of the Matrix-Variate Truncated Laplace Mechanism applied to~$A(D)$ but not in the support of the matrix-variate Laplace mechanism applied to~$A(D')$. Then~$\prob{\left[A(D)+Z\right]\in V}\leq \delta$.
    \end{enumerate}

    We start with Condition 1. Since each entry in~$u\in U$ is an independent draw from the truncated Laplace mechanism, we have that
    \begin{equation}
        f_D(u) = 
        \begin{cases}
            \prod_{i=1}^{n}\prod_{j=1}^m\frac{1}{\zeta_{i,j}}\exp\left(-\frac{\epsilon |u_{i,j} +A(D)_{i,j}|}{\Delta_{1,1}A}\right) & u\in[A(D)-s_A\ones^{m\times n}, A(D)+s_A\ones^{m\times n}]\\
            0 & \textnormal{otherwise}
        \end{cases},
    \end{equation}
    where~$\zeta_{i,j}$ is a normalizing constant.
    Since~$u\in[A(D)-s_A\ones^{m\times n}, A(D)+s_A\ones^{m\times n}]\cap [A(D')-s_A\ones^{m\times n}, A(D')+s_A\ones^{m\times n}]$, we have that
    \begin{align}
        \frac{f_D(u)}{f_{D'}(u)} &= \prod_{i=1}^{n}\prod_{j=1}^m\frac{\exp\left(-\frac{\epsilon |u_{i,j} +A(D)_{i,j}|}{\Delta_{1,1}A}\right)}{\exp\left(-\frac{\epsilon |u_{i,j} +A(D')_{i,j}|}{\Delta_{1,1}A}\right)}\\
        & = \prod_{i=1}^{n}\prod_{j=1}^m\exp\left(\frac{\epsilon |u_{i,j} +A(D')_{i,j}|-\epsilon|u_{i,j} +A(D)_{i,j}|}{\Delta_{1,1}A}\right).\label{eq:bound_triangle_reverse}
    \end{align}
    We then bound~\eqref{eq:bound_triangle_reverse} using the reverse triangle inequality to obtain
    \begin{align}
        \prod_{i=1}^{n}\prod_{j=1}^m\exp\left(\frac{\epsilon |u_{i,j} +A(D')_{i,j}|-\epsilon|u_{i,j} +A(D)_{i,j}|}{\Delta_{1,1}A}\right) &\leq \prod_{i=1}^{n}\prod_{j=1}^m\exp\left(\frac{\epsilon |A(D)_{i,j}-A(D')_{i,j}|}{\Delta_{1,1}A}\right) \\
        &=
        \exp\left(\frac{\epsilon \sum_{i=1}^{n}\sum_{j=1}^m|A(D)_{i,j}-A(D')_{i,j}|}{\Delta_{1,1}A}\right).
    \end{align}
    The term~$\sum_{i=1}^{n}\sum_{j=1}^m|A(D)_{i,j}-A(D')_{i,j}|$ is simply the~$(1, 1)$-norm of~$A(D)-A(D')$, which we bound using the sensitivity relation from Definition~\ref{def:l11_sens} to obtain
    \begin{equation}
        \exp\left(\frac{\epsilon \sum_{i=1}^{n}\sum_{j=1}^m|A(D)_{i,j}-A(D')_{i,j}|}{\Delta_{1,1}A}\right)\leq \exp\left(\frac{\epsilon \Delta_{1,1}A}{\Delta_{1,1}A}\right) = e^{\epsilon}.
    \end{equation}
    Therefore we have that
    \begin{equation}
        \frac{f_D(u)}{f_{D'}(u)}\leq e^{\epsilon},
    \end{equation}
    which shows that Condition 1 holds. 
    
    For Condition 2, if~$A(D)+Z>0\in V$, then for some~$i\in[m]$ and~$j\in[n]$ such that~$A(D)_{i,j}>0$, 
    we have that either~$A(D)_{i,j}+Z_{i,j}<A(D')_{i,j}-s_A$ or~$A(D)_{i,j}+Z_{i,j}>A(D')_{i,j}+s_A$. As a result, we know that either~$Z_{i,j}<\Delta_{1,1}A -s_A$ or~$Z_{i,j}> s_A-\Delta_{1,1}A$. 
    Given the probability density function~$f$ from Lemma~\ref{lem:matrix_lap}, and noting that there are~$nm$ entries in~$A(D)$, we can compute~$\prob{\left[A(D)+Z\right]\in V}$ by
    \begin{multline}\label{eq:sub_zeta}
        nm\left(\int_{-s_A}^{-s_A+\Delta_{1,1}A} f(z) dz + \int_{s_A-\Delta_{1,1}A}^{s_A} f(z) dz\right) = \\ \frac{nm}{\zeta}\left(\int_{-s_A}^{-s_A+\Delta_{1,1}A} \exp\left(-\frac{|z|\epsilon}{\Delta_{1,1}A} \right) dz + \int_{s_A-\Delta_{1,1}A}^{s_A} \exp\left(-\frac{|z|\epsilon}{\Delta_{1,1}A} \right) dz\right).
    \end{multline}
    Since~$\zeta=\mathbb{P}(z_i\leq |s_A|) = 2\Delta_{1,1}A(1-e^{-\frac{\epsilon s_A}{\Delta_{1,1}A}})\frac{1}{\epsilon}$, evaluating the integrals and substituting the expression for~$\zeta$ into~\eqref{eq:sub_zeta}, we find
    \begin{equation}
        \frac{nm}{\zeta}\left(\int_{-s_A}^{-s_A+\Delta_{1,1}A} \exp\left(-\frac{|z|\epsilon}{\Delta_{1,1}A} \right) dz + \int_{s_A-\Delta_{1,1}A}^{s_A} \exp\left(-\frac{|z|\epsilon}{\Delta_{1,1}A} \right) dz\right) = \frac{nm(e^{\epsilon}-1)}{1-e^{-\frac{\epsilon s_A}{\Delta_{1,1}A}}} = \delta,
    \end{equation}
    where the final equality follows from the definition of~$s_A$ from Lemma~\ref{lem:matrix_lap}. We then have that~$\prob{\left[A(D)+Z\right]\in V}\leq \delta$, which satisfies Condition 2. 
    
    Next, we will show how Conditions 1 and 2 imply~$(\epsilon, \delta)$-differential privacy. Let~$M\subseteq [A(D)-s_A\ones^{m\times n}, A(D)+s_A\ones^{m\times n}]$ be any set of matrices in the support of the Matrix-Variate Truncated Laplace mechanism applied to~$A(D)$. Let~$M_0 = M\cap [A(D')-s_A\ones^{m\times n}, A(D')+s_A\ones^{m\times n}]$ be the set of matrices in~$M$ that are also in the support of the Matrix-Variate Truncated Laplace Mechanism applied to~$A(D')$. Additionally, let~$M_1 = M\setminus [A(D')-s_A\ones^{m\times n}, A(D')+s_A\ones^{m\times n}]$ be the set of matrices in~$M$ that are not in the support of the Matrix-Variate Truncated Laplace Mechanism applied to~$A(D')$. Noting that~$M_0$ and~$M_1$ completely partition~$M$, we have that
     \begin{equation}
        \prob{A(D)+Z\in M} = \prob{A(D)+Z\in M_0} + \prob{A(D)+Z\in M_1}.
    \end{equation}
    From the definition of~$M_1$ it follows then that~$M_1\subseteq V$, and thus
    \begin{equation}
        \prob{A(D)+Z\in M_0} + \prob{A(D)+Z\in M_1} \leq \prob{A(D)+Z\in M_0} + \prob{A(D)+Z\in V}.
    \end{equation}
   Expressing these probabilities in terms of density functions, we have that
   \begin{equation}\label{eq:need_cond}
       \prob{A(D)+Z\in M_0} + \prob{A(D)+Z\in V} = \int_{M_0} f_D(u) du + \int_{V} f_D(u)du.
   \end{equation}
    From Condition 1, 
    \begin{equation}\label{eq:cond1}
        \int_{M_0} f_D(u) du \leq e^{\epsilon}\int_{M_0} f_{D'}(u) du
    \end{equation}
    and from Condition 2,
    \begin{equation}\label{eq:cond2}
         \int_{V} f_D(u)du \leq \delta.
    \end{equation}
    Substituting the inequalities in~\eqref{eq:cond1} and~\eqref{eq:cond2} into~\eqref{eq:need_cond} yields
    \begin{equation}
        \int_{M_0} f_D(u) du + \int_{V} f_D(u)du \leq e^{\epsilon}\int_{M_0} f_{D'}(u) du + \delta.
    \end{equation}
    Noting that~$\int_{M_0} f_{D'}(u) du = \prob{A(D')+Z\in M}$, we have that
    \begin{equation}
         \prob{A(D)+Z\in M}\leq e^{\epsilon}\prob{A(D')+Z\in M} + \delta,
    \end{equation}
    which is the definition of~$(\epsilon, \delta)$-differential privacy from Definition~\ref{def:dp}. Thus, Conditions 1 and 2 imply~$(\epsilon, \delta)$-differential privacy. 
    Since Conditions 1 and 2 have been satisfied, the statement in Lemma~\ref{lem:matrix_lap} follows.
    
    \hfill$\square$

    \subsection{Proof of Theorem~\ref{thm:privacy}}\label{app:thm_privacy}
We see that~$\sigma_A$,~$s_A$,~$\sigma_B$, and~$s_B$ are all computed using~$\new{\alpha_i}\epsilon$
    and~$\new{\alpha_i}\delta$ \new{ for~$i\in\{A, b\}$}, and that~$\sigma_c$ is computed using~$\new{\alpha_c}\epsilon$. Thus the computations of~$\tilde{A}$ and~$\tilde{b}$ each keep~$D$~$(\new{\alpha_i}\epsilon, \new{\alpha_i}\delta)$-differentially private \new{for~$i\in\{A, b\}$} from Lemmas~\ref{lem:matrix_lap} and \ref{lem:vec_lap}, and the computation of~$\tilde{c}$ keeps~$D$~$(\new{\alpha_c}\epsilon, 0)$-differentially private from Lemma~\ref{lem:lap_mech}. Additionally, since Problem~\eqref{opt:DP} is a composition of~$(\new{\alpha_i}\epsilon, \new{\alpha_i}\delta)$-differentially private mechanisms for~$i\in\{A, b\}$ and an~$(\new{\alpha_c}\epsilon, 0)$-differentially private mechanism, \new{ and since~$\alpha_A+\alpha_b+\alpha_c = 1$}, we conclude that Problem~\eqref{opt:DP} itself keeps~$D$~$(\epsilon, \delta)$-differentially private via Lemma~\ref{lem:comp}. It then follows that Algorithm~\ref{algo:solve} provides~$D$ with~$(\epsilon, \delta)$-differentially privacy since solving Problem~\eqref{opt:DP} is simply post-processing of private data, and thus is~$(\epsilon, \delta)$-differentially private according to Lemma~\ref{lem:arbitrary}.\hfill$\square$
    \subsection{Proof of Theorem~\ref{thm:feasible}}\label{app:thm_feasible}
    Since~$\tilde{b}\in\setb$ as a result of~\eqref{eq:bbarpost}, we will fix an arbitrary~$\tilde{b}\in\setb$ and show that the solution satisfies the original, non-private constraints, 
    which implies that the same is true for all~$\tilde{b}$. We follow a similar approach to Theorem 2 in~\cite{benvenuti2024guaranteed}, and we restate some steps for completeness.
    
    By definition, the constraint matrix~$\tilde{A}$ is component-wise less than or equal to the matrix~$\hat{A}$ in which~$\hat{A}_{i,j}
     = \sup_{D\in\dbset} A_{i,j}(D)$ for all~$i \in [m]$ and~$j \in [n]$. 
    Since~$x\geq 0$ and the vector~$\tilde{b}$ 
    is fixed, we have that the set~$\{x:\hat{A}x\leq \tilde{b}\}$ is contained in~$\{x:\tilde{A}x\leq \tilde{b}\}$ due to the fact that $\tilde{A}_{i,j}\leq \sup_{D\in\dbset} A_{i,j}(D)$.
    Thus, we know that $\{x:\tilde{A}x\leq \tilde{b}\}\supseteq \{x:\hat{A}x\leq \tilde{b}\}$. 
    
    We will now show that the sets~$\bigcap_{D\subseteq\dbset}\{x:A(D)x\leq \tilde{b}\}$ and~$\{x:\hat{A}x\leq \tilde{b}\}$ are equal. 
    For any~$x$ in the first set, 
    we know that $A_i(D)\cdot x\leq \tilde{b}_i$ 
    for all~$A(D) \in \dbset$. 
    By definition of the supremum, it follows then that~$\sup_{D\in\dbset} (A_i(D)\cdot x)\leq \tilde{b}_i$ for all~$i \in [m]$, 
    and therefore~$\hat{A}x\leq \tilde{b}$. As a result, if~$x\in\bigcap_{D\subseteq\dbset}\{z:A(D)z\leq \tilde{b}\}$, then~$x\in\{z:\hat{A}z\leq \tilde{b}\}$. 
    We now show that the reverse is true. If~$\hat{A}x\leq \tilde{b}$, then~$A(D)x\leq \tilde{b}$ for all~$A(D) \in \seta$
     by definition of the supremum. 
     Thus, if~$x\in\{z:\hat{A}z\leq \tilde{b}\}$, then~$x\in\bigcap_{D\subseteq\dbset}\{z:A(D)z\leq \tilde{b}\}$. 
     
    Since we have~$\{x:\tilde{A}x\leq \tilde{b}\}\supseteq \{x:\hat{A}x\leq \tilde{b}\}$ and~$\{x:\hat{A}x\leq \tilde{b}\} = \bigcap_{D\in\dbset}\{x:A(D)x\leq \tilde{b}\}$, we know that~$\{x:\tilde{A}x\leq \tilde{b}\}\supseteq\bigcap_{D\in\dbset}\{x:A(D)x\leq \tilde{b}\}$. From Assumption~\ref{ass:feast}, the set~$\bigcap_{D\in\dbset}\{x:A(D)x\leq \tilde{b}\}$ is non-empty, and therefore the set~$\{x:\tilde{A}x\leq \tilde{b}\}$ is non-empty, and thus it yields a feasible optimization problem. 
    Since this is true for any~$\tilde{b}\in\setb$, it follows that the set~$\{x:\tilde{A}x\leq \tilde{b}\}$ is non-empty and thus it yields a feasible optimization problem for all~$\tilde{b}\in\mathcal{B}$. Since this approach only tightens the constraints, any~$x\in\{z:\tilde{A}z\leq \tilde{b}\}$ will also satisfy the original, non-private constraints, as desired. 
    
    \hfill$\square$

    \subsection{Proof of Theorem~\ref{thm:lp_accuracy_e}}\label{app:accuracy}
    Starting with~$\Expectation{c(D)^Tx^*-c(D)^T\tilde{x}^*} $, we  factor out~$c(D)^T$ and use the Cauchy-Schwarz inequality to bound~$\Expectation{c(D)^Tx^*-c(D)^T\tilde{x}^*} $ as
    \begin{equation}\label{eq:upper_bound_with_cat}
        \Expectation{c(D)^Tx^*-c(D)^T\tilde{x}^*} \leq \norm{c(D)}_2\Expectation{\norm{x^* - \tilde{x}^*}_2}.
    \end{equation}
    We then concatenate~$x^*$ with the solution to Problems~\eqref{opt:dual}  and~$\tilde{x}^*$ with the solution to Problem~\eqref{opt:DPD}, to upper bound~\eqref{eq:upper_bound_with_cat} by
    \begin{equation}\label{eq:sub_ex_here}
        \norm{c(D)}_2\Expectation{\norm{x^* - \tilde{x}^*}_2} \leq \norm{c(D)}_2\Expectation{\norm{\begin{bmatrix}
            x^*\\ \dual^*
        \end{bmatrix} - \begin{bmatrix}
            \tilde{x}^*\\ \tilde{\dual}^*
        \end{bmatrix}}_2}.
    \end{equation}
    We then focus on bounding~$\Expectation{\norm{\begin{bmatrix}
            x^*\\ \dual^*
        \end{bmatrix} - \begin{bmatrix}
            \tilde{x}^*\\ \tilde{\dual}^*
        \end{bmatrix}}_2}$. From Lemma~\ref{lem:perturbation} 
        we have
    \begin{equation}\label{eq:initial_acc}
        \Expectation{\norm{\begin{bmatrix}
            x^*\\ \dual^*
        \end{bmatrix} - \begin{bmatrix}
            \tilde{x}^*\\ \tilde{\dual}^*
        \end{bmatrix}}_2}\leq H_{2,2}(G, C)\Expectation{\norm{\begin{bmatrix}
            [(A-\tilde{A})\tilde{x}^* - (b-\tilde{b})]^{-}\\ [(A-\tilde{A})^T\tilde{\dual}^* - (c-\tilde{c})]^{+}\\
            (c-\tilde{c})^T\tilde{x}^* - (b-\tilde{b})^T\tilde{\dual}^*
        \end{bmatrix}}_2},
    \end{equation}
     where 
     \begin{equation}
     \tilde{A}_{i,j} = \min\Big\{A(D)_{i,j}+(s_A+Z_{i,j})\circ\ind{A(D)\neq 0}_{i,j}, \sup_{D\in\dbset} A(D)_{i,j}\Big\},
     \end{equation}
     \begin{equation}
     \tilde{b}_i  =  \max\Big\{b(D)_i-s_b+z_{b_i},\inf_{D\in\dbset} b(D)_i\Big\}, 
     \end{equation}
     and~$\tilde{c}_i = z^0_{c_i} +c(D)_i$, where~$z_c^0\in\mathbb{R}^n$ is the noise added to~$c(D)$; 
     the~$i^{th}$ entry of~$z_c^0$ is~$z_{c,i}^0 = z_{c,i}$ if~$c(D)_i \neq 0$ and it is~$0$ otherwise. 
     We will analyze two cases: (i)~$\tilde{A}_{i,j} = A(D)_{i,j}+(s_A+Z_{i,j})\circ\ind{A(D)\neq 0}_{i,j}$ and~$\tilde{b}_i = b(D)_i-s_b+z_{b_i}$ for all~$i,j$, 
     and (ii) there exist indices~$i, j$ where $\tilde{A}_{i,j} = \sup_{D\in\dbset} A(D)_{i,j}$ or~$\tilde{b}_i = \inf_{D\in\dbset} b(D)_i$. Beginning with case (i), using the non-expansive property of the projections onto the non-negative and non-positive orthants, and substituting in the quantities~$\tilde{A}$,~$\tilde{b}$, and~$\tilde{c}$, we have that 
    \begin{multline}\label{eq:sub_y}
        H_{2,2}(G, C)\Expectation{\norm{\begin{bmatrix}
            [(A-\tilde{A})\tilde{x}^* - (b-\tilde{b})]^{-}\\ [(A-\tilde{A})^T\tilde{\dual}^* - (c-\tilde{c})]^{+}\\
            (c-\tilde{c})^T\tilde{x}^* - (b-\tilde{b})^T\tilde{\dual}^*
        \end{bmatrix}}_2} \leq \\ H_{2,2}(G, C)\Expectation{\norm{\begin{bmatrix}
            -\Big((s_A\ones^{n\times m}+Z)\circ\ind{A(D)\neq 0}\Big)\tilde{x}^* + (-s_b\ones^{m}+z_{b})\\ \Big( {-(s_A}\ones^{m\times n}+Z)\circ\ind{A(D)\neq 0}\Big)^T\tilde{\dual}^* + z^0_c\\
            -(z_c^0)^T\tilde{x}^* + (-s_b\ones^m+z_{b})^T\tilde{\dual}^*
        \end{bmatrix}}_2}.
    \end{multline} 
    Next, for~$i \in [m]$ and~$j \in [n]$, 
    we define~$Y_{i,j} = (s_A+Z_{i,j})\circ\ind{A(D)\neq 0}_{i,j}$ and~$y_{b_i} = -s_b+z_{b_i}$.
    From Lemma~\ref{lem:matrix_lap} we see that~$Y_{i,j}\sim \mathcal{L}_T(\frac{\Delta_{1,1}A}{\new{\alpha_A}\epsilon},[0, 2s_A])$ for all~$Y_{i, j}\neq 0$,
    and from Lemma~\ref{lem:vec_lap} we see that~$y_{b_i}\sim \mathcal{L}_T(\frac{\Delta_{1}b}{\new{\alpha_b}\epsilon},[-2s_b, 0])$. 
    We substitute these into ~\eqref{eq:sub_y} to find
    \begin{multline}
        H_{2,2}(G, C)\Expectation{\norm{\begin{bmatrix}
            -\Big((s_A\ones^{n\times m}+Z)\circ\ind{A(D)\neq 0}\Big)\tilde{x}^* + (-s_b\ones^{m}+z_{b})\\ \Big( {-(s_A}\ones^{m\times n}+Z)\circ\ind{A(D)\neq 0}\Big)^T\tilde{\dual}^* + z^0_c\\
            -(z_c^0)^T\tilde{x}^* + (-s_b\ones^m+z_{b})^T\tilde{\dual}^*
        \end{bmatrix}}_2} =\\ H_{2,2}(G, C)\Expectation{\norm{\begin{bmatrix}
            -Y\tilde{x}^* + y_b\\  -Y^T\tilde{\dual}^* + z_c^0\\
            -(z_c^0)^T\tilde{x}^* + y_b^T\tilde{\dual}^*
        \end{bmatrix}}_2}
    \end{multline}
    and we expand the $2$-norm to find
    \begin{multline}
        H_{2,2}(G, C)\Expectation{\norm{\begin{bmatrix}
            -Y\tilde{x}^* + y_b\\  -Y^T\tilde{\dual}^* + z_c^0\\
            -(z_c^0)^T\tilde{x}^* + y_b^T\tilde{\dual}^*
        \end{bmatrix}}_2} =\\ H_{2,2}(G, C) \Expectation{\sqrt{(-Y\tilde{x}^* + y_b)^T(-Y\tilde{x}^* + y_b) +(-Y^T\tilde{\dual}^* + z_c^0)^T(-Y^T\tilde{\dual}^* + z_c^0) + (-(z_c^0)^T\tilde{x}^* + y_b^T\tilde{\dual}^*)^2}}.
    \end{multline}
    Since the square root is a concave function, we use Jensen's inequality to bound the expectation by
    \begin{multline}\label{eq:put_ex_here}
        H_{2,2}(G, C) \Expectation{\sqrt{(-Y\tilde{x}^* + y_b)^T(-Y\tilde{x}^* + y_b) +(-Y^T\tilde{\dual}^* + z_c^0)^T(-Y^T\tilde{\dual}^* + z_c^0) + (-(z_c^0)^T\tilde{x}^* + y_b^T\tilde{\dual})^2}}\leq \\
        H_{2,2}(G, C) \sqrt{\Expectation{(-Y\tilde{x}^* + y_b)^T(-Y\tilde{x}^* + y_b)} +\Expectation{(-Y^T\tilde{\dual}^* + z_c^0)^T(-Y^T\tilde{\dual}^* + z_c^0)} + \Expectation{(-(z_c^0)^T\tilde{x}^* + y_b^T\tilde{\dual}^*)^2}}.
    \end{multline}
    We then look to bound each of the three expectations. Starting with~$\Expectation{(-Y\tilde{x}^* + y_b)^T(-Y\tilde{x}^* + y_b)}$, we have
    \begin{equation}\label{eq:ex1_bound_max}
        \Expectation{(-Y\tilde{x}^* + y_b)^T(-Y\tilde{x}^* + y_b)} = \Expectation{\sum_{i=1}^m [y_{b_i} - (Y\tilde{x}^*)_i]^2}.
    \end{equation}
    We then expand the square to find
    \begin{equation}\label{eq:sub_expectation}
        \Expectation{\sum_{i=1}^m [y_{b_i} - (Y\tilde{x}^*)_i]^2} = \Expectation{\sum_{i=1}^m y_{b_i}^2 - 2(Y\tilde{x}^*)_iy_{b_i} + (Y\tilde{x}^*)_i^2} = \sum_{i=1}^m \Expectation{y_{b_i}^2} - 2\Expectation{(Y\tilde{x}^*)_iy_{b_i}} + \Expectation{(Y\tilde{x}^*)_i^2}.
    \end{equation}
    The mean of a truncated Laplace random variable is the midpoint of the support. Thus, since~$y_{b_i}~\sim \mathcal{L}_T(\frac{\Delta_{1}b}{\new{\alpha_b}\epsilon},[-2s_b, 0])$, it follows that~$\Expectation{y_{b_i}} = -s_b$, 
    and~$\Expectation{y_{b_i}^2} = \Var{y_{b_i}}+\Expectation{y_{b_i}}^2 = 2\left(\frac{\Delta_{1}b}{\new{\alpha_b}\epsilon}\right)^2 + s_b^2$.
    We substitute these into~\eqref{eq:sub_expectation} and expand the matrix multiplication to find
    \begin{multline}\label{eq:rep_with_max}
           \sum_{i=1}^m \Expectation{y_{b_i}^2} - 2\Expectation{(Y\tilde{x}^*)_iy_{b_i}} + \Expectation{(Y\tilde{x}^*)_i^2} =\\ 2m\left(\frac{\Delta_{1}b}{\new{\alpha_b}\epsilon}\right)^2 + ms_b^2 - 2\sum_{i=1}^m\Expectation{\sum_{j = 1}^n Y_{i, j}\tilde{x}^*_jy_{b_i}} + \sum_{i=1}^m\Expectation{\left(\sum_{j = 1}^n Y_{i, j}\tilde{x}^*_j\right)^2}.
    \end{multline}
    We then upper bound~$\tilde{x}^*_j\leq \chi$, where~$\chi$ is from~\eqref{eq:chi}, via the fact that that~$\tilde{x}^*_j\leq \max_{D\in\dbset, j\in [N]} \bar{x}_j^* = \chi$ for all~$j$. We have that~$Y_{i, j}\tilde{x}^*_jy_{b_i}\leq Y_{i, j}\chi y_{b_i}$. Since~$y_{b_i}$ is independent of~$Y_{i,j}$ and~$\chi$, we have that 
    \begin{equation}\label{eq:holy_independent}
        \Expectation{\sum_{j = 1}^n Y_{i, j}\chi y_{b_i}} = \chi\Expectation{\sum_{j = 1}^n Y_{i, j}}\Expectation{y_{b_i}} = -\chi s_b\Expectation{\sum_{j = 1}^n Y_{i, j}}.
    \end{equation}
    Thus, we substitute~\eqref{eq:holy_independent} and~$\chi$ into~\eqref{eq:rep_with_max} to obtain
    \begin{multline}\label{eq:ex1_bound_by_max}
          2m\left(\frac{\Delta_{1}b}{\new{\alpha_b}\epsilon}\right)^2 + ms_b^2 - 2\sum_{i=1}^m\Expectation{\sum_{j = 1}^n Y_{i, j}\tilde{x}^*_jy_{b_i}} + \sum_{i=1}^m\Expectation{\left(\sum_{j = 1}^n Y_{i, j}\tilde{x}^*_j\right)^2}\leq \\ 2m\left(\frac{\Delta_{1}b}{\new{\alpha_b}\epsilon}\right)^2 + ms_b^2 + 2s_b\chi\sum_{i=1}^m\Expectation{\sum_{j = 1}^n Y_{i, j}} +\chi^2\sum_{i=1}^m\Expectation{\left(\sum_{j = 1}^n Y_{i, j}\right)^2}.
    \end{multline}
    
    Note that for all indices~$i,j$ such that~$A(D)_{i,j}\neq0$ by definition we have~$Y_{i,j}\sim \mathcal{L}_T(\frac{\Delta_{1,1}A}{\new{\alpha_A}\epsilon},[0, 2s_A]),$ with each~$Y_{i,j}$ mutually independent for all~$i, j$, and~$Y_{i,j} = 0$ if~$A(D)_{i,j} = 0$. 
    As a result,~$\Expectation{\sum_{j = 1}^n Y_{i, j}} = \sum_{j = 1}^{n} \Expectation{Y_{i, j}} = n^0_i s_A$, where~$n^0_i$ is the number of non-zero elements in row~$i$ of~$Y$. Additionally, with independent~$Y_{i,j}$, we have that
    \begin{equation}\label{eq:y_ex}
        \Expectation{\left(\sum_{j = 1}^n Y_{i, j}\right)^2} = \Var{\sum_{j = 1}^n Y_{i, j}}+\Expectation{\sum_{j = 1}^n Y_{i, j}}^2 =2n^0_i\left(\frac{\Delta_{1,1}A}{\new{\alpha_A}\epsilon} \right)^2+ (n^0_i s_A)^2. 
    \end{equation}
    Substituting the first and second moments then gives us
    \begin{multline}\label{eq:ex1}
           2m\left(\frac{\Delta_{1}b}{\new{\alpha_b}\epsilon}\right)^2 + ms_b^2 + 2s_b\chi\sum_{i=1}^m\Expectation{\sum_{j = 1}^n Y_{i, j}} +\chi^2\sum_{i=1}^m\Expectation{\left(\sum_{j = 1}^n Y_{i, j}\right)^2} =\\ 2m\left(\frac{\Delta_{1}b}{\new{\alpha_b}\epsilon}\right)^2 + ms_b^2 + 2s_b\chi\sum_{i=1}^m\left(n^0_i s_A\right) +\chi^2\sum_{i=1}^m\left(2n^0_i\left(\frac{\Delta_{1,1}A}{\new{\alpha_A}\epsilon}\right)^2 + (n^0_i s_A)^2\right),
    \end{multline}
    which completes the bound on~$\Expectation{(-Y\tilde{x}^* + y_b)^T(-Y\tilde{x}^* + y_b)}$. 
    
    Next, we bound~$\Expectation{(-Y^T\tilde{\dual}^* + z_c^0)^T(-Y^T\tilde{\dual}^* + z_c^0)}$. Following a similar procedure, we have
    \begin{equation}
        \Expectation{(-Y^T\tilde{\dual}^* + z_c^0)^T(-Y^T\tilde{\dual}^* + z_c^0)} = \Expectation{\sum_{j=1}^m [z_{c_j}^0 - (Y^T\tilde{\dual}^*)_j]^2}.
    \end{equation}
    Expanding, we obtain
    \begin{multline}\label{eq:ex2_bound_max}
        \Expectation{\sum_{j=1}^n [z_{c_j}^0 - (Y^T\tilde{\dual}^*)_j]^2} = \Expectation{\sum_{j=1}^n (z_{c_j}^0)^2 - 2z_{c_j}^0(Y^T\tilde{\dual}^*)_j + (Y^T\tilde{\dual}^*)_j^2} =\\ \sum_{j=1}^n \Expectation{(z_{c_j}^0)^2} - 2\Expectation{z_{c_j}^0(Y^T\tilde{\dual}^*)_j} + \Expectation{(Y^T\tilde{\dual}^*)_j^2}.
    \end{multline}

    Since the non-zero elements of~$z_{c_j}^0$ are drawn from a (conventional, non-truncated) Laplace distribution centered at~$0$, we have that~$\Expectation{z_{c_j}^0} = 0$ and~$\Expectation{(z_{c_j}^0)^2} = \Var{z_{c_j}^0} = 2\left(\frac{\Delta_{1}c}{\new{\alpha_c}\epsilon}\right)^2$ for each~$j$ such that~$z_{c_j}^0\neq 0$. 
    For the term~$- 2\Expectation{z_{c_j}^0(Y^T\tilde{\dual}^*)_j}$, we use the law of total expectation to find
    \begin{equation}
        - 2\Expectation{z_{c_j}^0(Y^T\tilde{\dual}^*)_j} = 2\left(-\Expectation{z_{c_j}^0(Y^T\tilde{\dual}^*)_j \mid z_{c_j}\geq0}\prob{z_{c_j}\geq0} - \Expectation{z_{c_j}^0(Y^T\tilde{\dual}^*)_j \mid z_{c_j}<0}\prob{z_{c_j}<0}\right). 
    \end{equation}
    To maximize the first expectation on the right-hand side, 
    we bound~$(Y^T)_{i,j}\geq -s_A$, and for the second expectation on the right-hand side we know that~$(Y^T)_{i,j}\leq s_A$, which we substitute in to obtain
    \begin{multline}
        2\left(-\Expectation{z_{c_j}^0(Y^T\tilde{\dual}^*)_j \mid z_{c_j}\geq0}\prob{z_{c_j}\geq0} - \Expectation{z_{c_j}^0(Y^T\tilde{\dual}^*)_j \mid z_{c_j}<0}\prob{z_{c_j}<0}\right)\leq\\
        2\left(\Expectation{z_{c_j}^0(s_A \ones^{n\times m}\tilde{\dual}^*)_j \mid z_{c_j}\geq0}\prob{z_{c_j}\geq0} + \Expectation{|z_{c_j}^0|(s_A \ones^{n\times m}\tilde{\dual}^*)_j \mid z_{c_j}<0}\prob{z_{c_j}<0}\right).
    \end{multline}
    We then upper bound each term in~$\tilde{\mu}^*\leq \dualmax\ones^m$, where~$\dualmax$ is from~\eqref{eq:dualmax}, to find
    \begin{multline}
        2\left(\Expectation{z_{c_j}^0(s_A \ones^{n\times m}\tilde{\dual}^*)_j \mid z_{c_j}\geq0}\prob{z_{c_j}\geq0} + \Expectation{|z_{c_j}^0|(s_A \ones^{n\times m}\tilde{\dual}^*)_j \mid z_{c_j}<0}\prob{z_{c_j}<0}\right)\leq\\
        2\left(\Expectation{z_{c_j}^0(s_A \ones^{n\times m}\dualmax\ones^m)_j \mid z_{c_j}\geq0}\prob{z_{c_j}\geq0} + \Expectation{|z_{c_j}^0|(s_A \ones^{n\times m}\dualmax\ones^m)_j \mid z_{c_j}<0}\prob{z_{c_j}<0}\right).
    \end{multline}
    Factoring the constant terms yields
    \begin{multline}
        2\left(\Expectation{z_{c_j}^0(s_A \ones^{n\times m}\dualmax\ones^m)_j \mid z_{c_j}\geq0}\prob{z_{c_j}\geq0} + \Expectation{|z_{c_j}^0|(s_A \ones^{n\times m}\dualmax\ones^m)_j \mid z_{c_j}<0}\prob{z_{c_j}<0}\right)\leq\\
        2(s_A \ones^{n\times m}\dualmax\ones^m)_j\left(\Expectation{z_{c_j}^0 \mid z_{c_j}\geq0}\prob{z_{c_j}\geq0} + \Expectation{|z_{c_j}^0| \mid z_{c_j}<0}\prob{z_{c_j}<0}\right).
    \end{multline}
    Then,~$\Expectation{z_{c_j}^0 \mid z_{c_j}>0}\prob{z_{c_j}>0} + \Expectation{|z_{c_j}^0| \mid z_{c_j}<0}\prob{z_{c_j}<0} = \Expectation{z_{c_j}} = 0$ from the law of total expectation. Thus, we have that
    \begin{equation}\label{eq:total_ex_go_br}
        - 2\Expectation{z_{c_j}^0(Y^T\tilde{\dual}^*)_j}\leq 0.
    \end{equation}
    
    Substituting the first and second moment of~$z_{c_j}^0$, as well as~\eqref{eq:total_ex_go_br}, into~\eqref{eq:ex2_bound_max}, we find
    \begin{equation}\label{eq:bound_with_lambda}
        \sum_{j=1}^n \Expectation{(z_{c_j}^0)^2} - 2\Expectation{z_{c_j}^0(Y^T\tilde{\dual}^*)_j} + \Expectation{(Y^T\tilde{\dual}^*)_j^2} \leq2 n^{0, c} \left(\frac{\Delta_{1}c}{\new{\alpha_c}\epsilon}\right)^2 + \sum_{j=1}^n\Expectation{\left(\sum_{i = 1}^m Y_{i, j}\tilde{\dual}_i^*\right)^2}, 
    \end{equation}
    where~$n^{0, c}$ is the number of non-zero elements in~$c(D)$. From Chapter~$10$ of~\cite{arrow1958studies}, Assumption~\ref{ass:slate} implies that~$\tilde{\dual}_i^*$ is bounded by
    \begin{equation}\label{eq:dstar}
        \tilde{\dual}_i^*\leq \max_{D\in\dbset}\norm{\dual^*}_{\infty} \leq \max_{D\in\dbset}\norm{\dual^*}_{1}\leq \frac{c(D)^T\eta - c(D)^T\omega}{\min_{j\in[m]} -A(D)_j\eta+b(D)_j} = \dualmax, 
    \end{equation}
    where~$\eta$ is a solution to Problem~\eqref{opt:primal}, and~$\omega$ is any Slater point for Problem~\eqref{opt:primal}. 
    We then bound~\eqref{eq:bound_with_lambda} using~$\dualmax$ to obtain
    \begin{equation}\label{eq:ex2_sub_2}
        2n^{0, c} \left(\frac{\Delta_{1}c}{\new{\alpha_c}\epsilon}\right)^2 + \sum_{j=1}^n\Expectation{\left(\sum_{i = 1}^m Y_{i, j}\tilde{\dual}_i^*\right)^2}\leq 2n^{0, c} \left(\frac{\Delta_{1}c}{\new{\alpha_c}\epsilon}\right)^2 + \dualmax^2\sum_{j=1}^n\Expectation{\left(\sum_{i = 1}^m Y_{i, j}\right)^2}.
    \end{equation}
    From~\eqref{eq:y_ex}, we have that~$\Expectation{\left(\sum_{i = 1}^m Y_{i, j}\right)^2} = 2\sum_{i=1}^m m^0_j\left(\frac{\Delta_{1,1}A}{\new{\alpha_A}\epsilon}\right)^2 + (m^0_j s_A)^2$, 
    where~$m^0_j$ is the number of non-zero entries in column~$j$ of~$Y$,
    and we then write~\eqref{eq:ex2_sub_2} as
    \begin{equation}\label{eq:ex2}
        2n^{0, c} \left(\frac{\Delta_{1}c}{\new{\alpha_c}\epsilon}\right)^2 + \dualmax^2\sum_{j=1}^n\Expectation{\left(\sum_{i = 1}^m Y_{i, j}\right)^2} = 2n^{0, c} \left(\frac{\Delta_{1}c}{\new{\alpha_c}\epsilon}\right)^2 + \dualmax^2 m\sum_{j=1}^n \left(2m^0_j\left(\frac{\Delta_{1,1}A}{\new{\alpha_A}\epsilon}\right)^2 + (m^0_j s_A)^2\right),
    \end{equation}
    which completes the bound on~$\Expectation{(-Y^T\tilde{\dual}^* + z_c^0)^T(-Y^T\tilde{\dual}^* + z_c^0)}$. 
    
    Finally, we bound~$\Expectation{(-(z_c^0)^T\tilde{x}^* + y_b^T\tilde{\dual})^2}$. Like the previous two expectations, we begin by expanding the square:
    \begin{multline}\label{eq:ex3_simplify}
        \Expectation{(-(z_c^0)^T\tilde{x}^* + y_b^T\tilde{\dual}^*)^2} = \Expectation{((z_c^0)^T\tilde{x}^*)^2 - 2(z_c^0)^T\tilde{x}^*y_b^T\tilde{\dual}^* +  (y_b^T\tilde{\dual}^*)^2} = \\ \Expectation{((z_c^0)^T\tilde{x}^*)^2} - 2\Expectation{(z_c^0)^T\tilde{x}^*y_b^T\tilde{\dual}^*} +  \Expectation{(y_b^T\tilde{\dual}^*)^2}.
    \end{multline}
    Using a similar law of total expectation approach to~\eqref{eq:holy_independent}, we have that
    \begin{equation}
        -2\mathbb{E}[(z_c^0)^T\tilde{x}^*y_b^T\tilde{\dual}^*] \leq  0,
    \end{equation}
   which implies that
    \begin{multline}\label{eq:ex3_bound_by_max}
        \Expectation{((z_c^0)^T\tilde{x}^*)^2} - 2\mathbb{E}[(z_c^0)^T\tilde{x}^*y_b^T\tilde{\dual}^*] +  \Expectation{(y_b^T\tilde{\dual}^*)^2} \leq \Expectation{((z_c^0)^T\tilde{x}^*)^2} +  \Expectation{(y_b^T\tilde{\dual}^*)^2} =\\ \Expectation{\left(\sum_{j = 1}^n z_{c_j}^0\tilde{x}_j^*\right)^2} +  \Expectation{\left(\sum_{i = 1}^m y_{b_i}\tilde{\dual}^*_i\right)^2}.
    \end{multline}
    Using the same bounds from~\eqref{eq:ex1_bound_by_max} and~\eqref{eq:ex2_bound_max}, we bound~\eqref{eq:ex3_bound_by_max} by
    \begin{equation}
        \Expectation{\left(\sum_{j = 1}^n z_{c_j}^0\tilde{x}_j^*\right)^2} +  \Expectation{\left(\sum_{i = 1}^m y_{b_i}\tilde{\dual}^*_i\right)^2}\leq \chi^2 \Expectation{\left(\sum_{j = 1}^n z_{c_j}^0\right)^2} + \dualmax^2\Expectation{\left(\sum_{i = 1}^m y_{b_i}\right)^2}.
    \end{equation}
    Using the previously computed expectations from~\eqref{eq:rep_with_max} and~\eqref{eq:bound_with_lambda}, we find that
    \begin{equation}\label{eq:ex3}
        \chi^2 \Expectation{\left(\sum_{j = 1}^n z_{c_j}^0\right)^2} + \dualmax^2\Expectation{\left(\sum_{i = 1}^m y_{b_i}\right)^2} = 2\chi^2 n^{0, c} \left(\frac{\Delta_{1}c}{\new{\alpha_c}\epsilon}\right)^2+ \dualmax^2\left(2\left(\frac{\Delta_{1}b}{\new{\alpha_b}\epsilon}\right)^2 + s_b^2 \right), 
    \end{equation}
    which completes the bound on~$\Expectation{(-(z_c^0)^T\tilde{x}^* + y_b^T\tilde{\dual})^2}$. Substituting~\eqref{eq:ex1},~\eqref{eq:ex2}, and~\eqref{eq:ex3} into~\eqref{eq:put_ex_here} yields
    \begin{multline}\label{eq:case1}
        H_{2,2}(G, C) \sqrt{\Expectation{(-Y\tilde{x}^* + y_b)^T(-Y\tilde{x}^* + y_b)} +\Expectation{(-Y^T\tilde{\dual}^* + z_c^0)^T(-Y^T\tilde{\dual}^* + z_c^0)} + \Expectation{(-(z_c^0)^T\tilde{x}^* + y_b^T\tilde{\dual}^*)^2}}\leq \\ 
        H_{2,2}(G, C) \Bigg( 2m\left(\frac{\Delta_{1}b}{\new{\alpha_b}\epsilon}\right)^2 + ms_b^2 + 2s_b\chi\sum_{i=1}^m\left(n^0_i s_A\right) +\chi^2\sum_{i=1}^m\left(2n^0_i\left(\frac{\Delta_{1,1}A}{\new{\alpha_A}\epsilon}\right)^2 + (n^0_i s_A)^2\right) +\\
        2 n^{0, c} \left(\frac{\Delta_{1}c}{\new{\alpha_c}\epsilon}\right)^2 + m\dualmax^2\sum_{j=1}^n \left(2m^0_j\left(\frac{\Delta_{1,1}A}{\new{\alpha_A}\epsilon}\right)^2 + (m^0_j s_A)^2\right)+ 2\chi^2 n^{0, c} \left(\frac{\Delta_{1}c}{\new{\alpha_c}\epsilon}\right)^2 + \dualmax^2\left(2\left(\frac{\Delta_{1}b}{\new{\alpha_b}\epsilon}\right)^2 + s_b^2 \right) \Bigg)^{\frac{1}{2}}, 
    \end{multline}
    which completes the bound in case (i). 
    Next, we formulate the bound for case (ii). For case (ii), we seek a bound for the expression
    \begin{equation}\label{eq:bound_by_dbset}
        H_{2,2}(G, C)\Expectation{\norm{\begin{bmatrix}
            (A(D)-\tilde{A})\tilde{x}^* - (b(D)-\tilde{b})\\ (A(D)-\tilde{A})^T\tilde{\dual}^* - (c(D)-\tilde{c})\\
            (c(D)-\tilde{c})^T\tilde{x}^* - (b(D)-\tilde{b})^T\tilde{\dual}^*
        \end{bmatrix}}_2}.
    \end{equation}
    Using the triangle inequality to bound~\eqref{eq:bound_by_dbset}, we find
    \begin{multline}\label{eq:norm_stuff_here}
        H_{2,2}(G, C)\Expectation{\norm{\begin{bmatrix}
            (A(D)-\tilde{A})\tilde{x}^* - (b(D)-\tilde{b})\\ (A(D)-\tilde{A})^T\tilde{\dual}^* - (c(D)-\tilde{c})\\
            (c(D)-\tilde{c})^T\tilde{x}^* - (b(D)-\tilde{b})^T\tilde{\dual}^*
        \end{bmatrix}}_2}\leq H_{2,2}(G, C)\Bigg(\Expectation{\norm{\begin{bmatrix}
            (A(D)-\tilde{A})\tilde{x}^* \\
            (A(D)-\tilde{A})^T\tilde{\dual}^*\\
             (c(D)-\tilde{c})^T\tilde{x}^*
        \end{bmatrix}}_2}
        +\\ \Expectation{\norm{\begin{bmatrix}
            (b(D)-\tilde{b}) \\
           (c(D)-\tilde{c})\\
            (b(D)-\tilde{b})^T\tilde{\dual}^*
        \end{bmatrix}}_2
        }\Bigg). 
    \end{multline}
    For arbitrary vectors~$\alpha,\beta,\gamma\in\mathbb{R}^n$, note that
    \begin{multline}
        \norm{\begin{bmatrix}
        \alpha & \beta & \gamma
    \end{bmatrix}}_2 = \sqrt{\alpha_1^2+\cdots \alpha_n^2 +\beta_1^2+\cdots+\beta_n^2+\gamma_1^2+\cdots\gamma_n^2} = \sqrt{\norm{\alpha}_2^2+\norm{\beta}_2^2+\norm{\gamma}_2^2} \\= \norm{\begin{bmatrix}
        \norm{\alpha}_2 & \norm{\beta}_2 & \norm{\gamma}_2
    \end{bmatrix}}_2.
    \end{multline}
    Therefore, we can rewrite~\eqref{eq:norm_stuff_here} as
\begin{multline}\label{eq:bound_by_cauchy}
        H_{2,2}(G, C)\Bigg(\Expectation{\norm{\begin{bmatrix}
            (A(D)-\tilde{A})\tilde{x}^* \\
            (A(D)-\tilde{A})^T\tilde{\dual}^*\\
             (c(D)-\tilde{c})^T\tilde{x}^*
        \end{bmatrix}}_2}
        + \Expectation{\norm{\begin{bmatrix}
            (b(D)-\tilde{b}) \\
           (c(D)-\tilde{c})\\
            (b(D)-\tilde{b})^T\tilde{\dual}^*
        \end{bmatrix}}_2}\Bigg) = \\ H_{2,2}(G, C)\Bigg(\Expectation{\norm{\begin{bmatrix}
            \norm{(A(D)-\tilde{A})\tilde{x}^*}_2 \\
            \norm{(A(D)-\tilde{A})^T\tilde{\dual}^*}_2\\
             \norm{(c(D)-\tilde{c})^T\tilde{x}^*}_2
        \end{bmatrix}}_2}
        + \Expectation{\norm{\begin{bmatrix}
            \norm{(b(D)-\tilde{b})}_2 \\
           \norm{(c(D)-\tilde{c})}_2\\
            \norm{b(D)-\tilde{b}\tilde{\dual}^*}_2
        \end{bmatrix}}_2
        }\Bigg).
    \end{multline}
    Using the Cauchy-Schwarz inequality, we bound~\eqref{eq:bound_by_cauchy} as
    \begin{multline}\label{eq:sub_bounds_here}
        H_{2,2}(G, C)\Bigg(\Expectation{\norm{\begin{bmatrix}
            \norm{(A(D)-\tilde{A})\tilde{x}^*}_2 \\
            \norm{(A(D)-\tilde{A})^T\tilde{\dual}^*}_2\\
             \norm{(c(D)-\tilde{c})^T\tilde{x}^*}_2
        \end{bmatrix}}_2}
        + \Expectation{\norm{\begin{bmatrix}
            \norm{(b(D)-\tilde{b})}_2 \\
           \norm{(c(D)-\tilde{c})}_2\\
            \norm{b(D)-\tilde{b}\tilde{\dual}^*}_2
        \end{bmatrix}}_2
        }\Bigg) \leq \\ H_{2,2}(G, C)\Bigg(\Expectation{\norm{\begin{bmatrix}
            \norm{A(D)-\tilde{A}}_F\norm{\tilde{x}^*}_2 \\
            \norm{(A(D)-\tilde{A})^T}_F\norm{\tilde{\dual}^*}_2\\
             \norm{(c(D)-\tilde{c})^T}_2\norm{\tilde{x}^*}_2
        \end{bmatrix}}_2}
        + \Expectation{\norm{\begin{bmatrix}
            \norm{(b(D)-\tilde{b})}_2 \\
           \norm{(c(D)-\tilde{c})}_2\\
            \norm{b(D)-\tilde{b}}_2\norm{\tilde{\dual}^*}_2
        \end{bmatrix}}_2
        }\Bigg).
    \end{multline}
    For all~$i$ and~$j$, 
    the maximum value of~$(A(D)_{i,j} - \tilde{A}_{i,j})$ occurs when~$\tilde{A}_{i,j} = \hat{A}_{i,j}$, where~$\hat{A}_{i,j}$ is from~\eqref{eq:ahat}.
    Similarly, for~$b(D)-\tilde{b}$, the maximum value of~$(b(D)_{i} - \tilde{b}_{i})$ occurs when~$\tilde{b}_{i} = \hat{b}_{i}$, where~$\hat{b}_{i}$ is from~\eqref{eq:bhat}.
    Using these bounds and the fact
    that each~$\norm{\tilde{x}^*}_2\leq \sqrt{n}\chi$ and each~$\norm{\tilde{\mu^*}}_2\leq \sqrt{n}\dualmax$, we substitute these bounds  into~\eqref{eq:sub_bounds_here} to obtain
    \begin{multline}
        H_{2,2}(G, C)\left(\Expectation{\norm{\begin{bmatrix}
            \norm{A(D)-\tilde{A}}_F\norm{\tilde{x}^*}_2 \\
            \norm{(A(D)-\tilde{A})^T}_F\norm{\tilde{\dual}^*}_2\\
             \norm{(c(D)-\tilde{c})^T}_2\norm{\tilde{x}^*}_2
        \end{bmatrix}}_2}
        + \Expectation{\norm{\begin{bmatrix}
            \norm{(b(D)-\tilde{b})}_2 \\
           \norm{(c(D)-\tilde{c})}_2\\
            \norm{b(D)-\tilde{b}}_2\norm{\tilde{\dual}^*}_2
        \end{bmatrix}}_2
        }\right)\leq \\
        H_{2,2}(G, C)\left(\norm{\begin{bmatrix}
            \sqrt{n}\norm{(A(D)-\hat{A})}_F\chi \\
            \sqrt{m}\norm{(A(D)-\hat{A})^T}_F\dualmax\\
             \sqrt{n}\sqrt{\sum_{i=1}^n\Expectation{(c(D)_i-\tilde{c}_i)^2}}\chi
        \end{bmatrix}}_2
        + \norm{\begin{bmatrix}
            \norm{(b(D)-\hat{b})}_2 \\
           \sqrt{\sum_{i=1}^n\Expectation{(c(D)_i-\tilde{c}_i)^2}}\\
            \sqrt{m}\norm{b(D)-\hat{b}}_2\dualmax
        \end{bmatrix}}_2
        \right).
    \end{multline}
    From case (i), we know that~$\Expectation{(c(D)_i-\tilde{c}_i)^2} = 2\left(\frac{\Delta_{1}c}{\new{\alpha_c}\epsilon}\right)^2$ for all~$i$, which we substitute in for the expectation to obtain
    \begin{multline}
         H_{2,2}(G, C)\left(\norm{\begin{bmatrix}
           \sqrt{n}\norm{(A(D)-\hat{A})}_F\chi \\
            \sqrt{m}\norm{(A(D)-\hat{A})^T}_F\dualmax\\
             \sqrt{n}\sqrt{\sum_{i=1}^n\Expectation{(c(D)_i-\tilde{c}_i)^2}} \chi
        \end{bmatrix}}_2
        + \norm{\begin{bmatrix}
            \norm{(b(D)-\hat{b})}_2 \\
           \sqrt{\sum-{i=1}^n\Expectation{(c(D)_i-\tilde{c}_i)^2}}\\
            \sqrt{m}\norm{b(D)-\hat{b}}_2\dualmax
        \end{bmatrix}}_2
        \right)=\\
        H_{2,2}(G, C)\left(\norm{\begin{bmatrix}
            \sqrt{n}\norm{(A(D)-\hat{A})}_F\chi \\
            \sqrt{m}\norm{(A(D)-\hat{A})^T}_F\dualmax\\
             2\sqrt{n}\frac{\Delta_{1}c}{\new{\alpha_c}\epsilon}\chi
        \end{bmatrix}}_2
        + \norm{\begin{bmatrix}
            \norm{(b(D)-\hat{b})}_2 \\
           2\frac{\Delta_{1}c}{\new{\alpha_c}\epsilon}\\
            \sqrt{m}\norm{b(D)-\hat{b}}_2\dualmax
        \end{bmatrix}}_2
        \right), 
    \end{multline}
    which completes the bound in case (ii). Thus, we define
    \begin{equation}
    \rho = 
        \begin{cases}
            \Bigg( 2m\left(\frac{\Delta_{1}b}{\new{\alpha_b}\epsilon}\right)^2 + ms_b^2 + 2s_b\chi\sum_{i=1}^m\left(n^0_i s_A\right) +\dualmax^2\left(2\left(\frac{\Delta_{1}b}{\new{\alpha_b}\epsilon}\right)^2 + s_b^2\right)+\\\chi^2\sum_{i=1}^m\left(2n^0_i\left(\frac{\Delta_{1,1}A}{\new{\alpha_A}\epsilon}\right)^2 + (n^0_i s_A)^2\right) +
         2n^{0, c} \left(\frac{\Delta_{1}c}{\new{\alpha_c}\epsilon}\right)^2 +\\ m\dualmax^2\sum_{j=1}^n \left(2m^0_j\left(\frac{\Delta_{1,1}A}{\new{\alpha_A}\epsilon}\right)^{\new{2}} + (m^0_j s_A)^2\right)+ 2\chi^2 n^{0, c} \left(\frac{\Delta_{1}c}{\new{\alpha_c}\epsilon}\right)^2  \Bigg)^{\frac{1}{2}} & \parbox[t]{4.5cm}{\vspace{-1.7cm}if $\tilde{A}_{i,j} = A(D)_{i,j}+(s_A+Z_{i,j})\ind{A(D)\neq 0}_{i,j}$ and $\tilde{b}_i = b(D)_i-s_b+z_{b_i}$ for all $i,j$}\\

         \norm{\begin{bmatrix}
            \sqrt{n}\norm{(A(D)-\hat{A})}_F\chi \\
            \sqrt{m}\norm{(A(D)-\hat{A})^T}_F\dualmax\\
             2\sqrt{n}\frac{\Delta_{1}c}{\new{\alpha_c}\epsilon}\chi
        \end{bmatrix}}_2
        + \norm{\begin{bmatrix}
            \norm{(b(D)-\hat{b})}_2 \\
           2\frac{\Delta_{1}c}{\new{\alpha_c}\epsilon}\\
            \sqrt{m}\norm{b(D)-\hat{b}}_2\dualmax
        \end{bmatrix}}_2
         & \text{otherwise}
        \end{cases}
    \end{equation}
    and say that
    \begin{equation}
        \Expectation{\norm{\begin{bmatrix}
            x^*\\ \dual^*
        \end{bmatrix} - \begin{bmatrix}
            \tilde{x}^*\\ \tilde{\dual}^*
        \end{bmatrix}}_2}\leq H(G, C)\rho,   
    \end{equation}
    which we substitute into~\eqref{eq:sub_ex_here} to obtain
    \begin{equation}
        \Expectation{c(D)^Tx^*-c(D)^T\tilde{x}^*} \leq \norm{c(D)}_2\Expectation{\norm{\begin{bmatrix}
            x^*\\ \dual^*
        \end{bmatrix} - \begin{bmatrix}
            \tilde{x}^*\\ \tilde{\dual}^*
        \end{bmatrix}}_2}\leq \norm{c(D)}_2H(G, C)\rho, 
    \end{equation}
    which completes the proof.
    \hfill$\square$

    \new{\subsection{Proof of Corollary~\ref{cor:concentration}}\label{app:concentration}}
    
    \new{Since the feasible region of the private constraints is a subset of the feasible region of the 
    original, non-private constraints, the furthest apart that a private and non-private solution can be is the furthest distance between vertices in the original, non-private feasible region. This distance is equal to the diameter of that feasible region, which we denote~$\diam(\mathcal{F}(D))$. Additionally, the closest point to~$x^*$ in the non-private feasible region is simply~$x^*$ itself, and thus the minimum distance between two solutions is~$0$. Thus, defining~$R = \norm{x^*-\tilde{x}^*}_2$, we have that~$R\in[0, \diam(\mathcal{F}(D))]$.}
    
    \new{Hoeffding's inequality states that for a sum of~$n$ bounded random variables~$z_i\in[a, b]$, we have that
    \begin{equation}\label{eq:hoe}
        \prob{z-\Expectation{z}\leq \nu}\leq \exp\left(\frac{-2\nu^2}{n(b-a)^2}\right).
    \end{equation}
    Substituting~$a=0$,~$b = d(\mathcal{F}(D))$, and~$n=1$ into~\eqref{eq:hoe} we find
    \begin{equation}\label{eq:hoe2}
        \prob{R-\Expectation{R}\leq \nu}\leq \exp\left(\frac{-2\nu^2}{\diam(\mathcal{F}(D)^2}\right).
    \end{equation}
    Substituting
    \begin{equation}
        \nu = \diam(\mathcal{F}(D))\sqrt{\frac{\log(\frac{1}{t})}{2}}
    \end{equation}
into~\eqref{eq:hoe2} and taking the complementary probability yields
\begin{equation}
    \prob{R - \Expectation{R}\geq \diam(\mathcal{F}(D))\sqrt{\frac{\log(\frac{1}{t})}{2}}}\geq 1-t,
\end{equation}
which completes the proof. }


\hfill$\square$
    \new{\section{Additional Numerical Results}}
     \new{\subsection{Privacy Budget Analysis}}\label{subsec:priv_budget}
    \new{In this section, we explore the tradeoffs that arise when varying the privacy budget allocation between each component of an LP. Consider again the optimization problem from Section~\ref{sec:sim}:}
    \new{\begin{equation}
        \begin{aligned}
        &\begin{aligned}
            \underset{x\geq 0}{\operatorname{maximize}} &\quad \sum_{i\in[N]}\sum_{j\in[M]}p_{ij}(D)x_{ij}
        \end{aligned}
            \\
            &\begin{aligned}
                \operatorname{subject} \operatorname{to } \,\,&\sum_{j\in[M]} x_{ij}\leq n_i \quad \text{for }i\in[N]\\
                &\sum_{i\in[N]} p_{ij}(D) x_{ij} \leq b(D)_j \quad \text{for }j\in[M].
            \end{aligned}
        \end{aligned}   
        \tag{EX12}\label{opt:EX12}
    \end{equation} }
    \new{Figure~\ref{fig:example_1} illustrates the sub-optimality for an evenly distributed privacy budget~$\epsilon$, i.e.,~$\alpha_i = \frac{1}{3}$ for all~$i\in\{A, b, c\}$. Here, we evaluate the sub-optimality in the case of an unevenly distributed privacy budget. Specifically, we consider~$\alpha_c \in \{\frac{1}{3}, \frac{1}{2}, \frac{3}{4}, \frac{99}{100}\}$, and~$\alpha_A=\alpha_b = \frac{1-\alpha_c}{2}$. The baseline case of~$\alpha_A= \alpha_b=\alpha_c = \frac{1}{3}$ has~$28.25\%$ sub-optimality at~$\epsilon = 1$. As more budget is allocated to the cost, performance improves: at~$\epsilon = 1$, we see~$16.88\%$ sub-optimality when~$\frac{99}{100}$ of the privacy budget is allocated towards the cost. The improvement in performance with increasing~$\alpha_c$ implies that the most significant driving factor in sub-optimality in this scenario is the privatization of the cost. This property highlights the modularity of our method: if one component of an LP drives sub-optimality more than the others, then Algorithm~\ref{algo:solve} allows for the tuning of~$\alpha_i$ for~$i\in\{A, b, c\}$ to improve performance without changing the overall privacy guarantee afforded to~$D$.}
    \begin{figure}
        \centering
        \input{Figures/appendix_budget}
        \caption{\new{Sub-optimality gap with varying privacy budget allocated to the cost function, with the remaining cost budget divided evenly among the constraints.}}
        \label{fig:privacy_budget}
    \end{figure}
    
    \new{\subsection{Application to Constrained Markov decision processes}}\label{app:cmdp}

    \new{In this section, we present numerical simulations in the setting of constrained Markov decision processes described in~\cite{benvenuti2024guaranteed}, which we define next. We use~$\phi(S)$ to denote the set of probability distributions over a finite set~$S$.
    \begin{definition}[Constrained Markov Decision Process;~\cite{altman2021constrained}] \label{def:cmdp}
A Constrained Markov Decision Process (CMDP) is the tuple~$\mathcal{M} = (\mathcal{S}, \mathfrak{A}, r, \mathcal{T}, \mu, f, f_0)$, where $\mathcal{S}$ is the finite set of states and $\mathfrak{A}$ is the finite sets of actions, 
with~$|\mathcal{S}| = p$ and~$|\mathfrak{A}| = q$. Then $r:\mathcal{S}\times \mathfrak{A}\rightarrow \mathbb{R}$ is the reward function, $\mathcal{T}:\mathcal{S}\times \mathfrak{A}\rightarrow \simplex(\mathcal{S})$ is the transition probability function, and~$\mu\in\simplex(s)$ is a probability distribution over the initial states. Additionally, $f_i(D):\mathcal{S}\times \mathfrak{A}\to [0, f_{\text{max}, i}]$ for~$i\in[N]$ are immediate costs which depend on the database~$D$, and~$\Expectation{\sum_{t=0}^{\infty}\gamma^t f(D;s_t)}\leq f_0(D)$ with~$f_{0}(D)\in\mathbb{R}^N$  are constraints where~$f(D, s_t) = [f_1(D, s_t),\ldots, f_N(D, s_t)]^T$. 
\end{definition}
We use~$\mathcal{T}(s, a, y)$ denote the probability of transitioning from state~$s$ to state~$y$ when taking action~$a$. We consider CMDPs where constraints may be written as linear inequalities, i.e.,~$A(D)X\leq f_0(D)$, where~$A(D)\in\mathbb{R}^{pq\times N}$, $p, q, f_0$ are from Definition~\ref{def:cmdp}, and~$X$ is the decision variable in the policy synthesis optimization problem, which we describe next.}

\new{To solve an MDP is to compute an optimal policy for it, i.e., the function dictating which action to take given a state. This can be done efficiently via linear programming~\cite{puterman2014markov}:}
\new{
    \begin{equation}
        \begin{aligned}
        &\begin{aligned}
            \underset{ x_{\pi}}{\operatorname{maximize}} \quad &\sum_{s\in S}\sum_{\action\in A}&r(s, \action)x_{\pi}(s, \action)
        \end{aligned}
            \\
            &\begin{aligned}
                \text{ s.t. }   \quad &f(D; x_{\pi}(s, \action)) \leq f_0(D)\quad \text{ for all } s\in\mathcal{S}, \action\in\mathcal{A}, \\
                &\sum_{\action'\in \mathcal{A}} \ x_{\pi}(s',\action') - \gamma
                \sum_{s\in \mathcal{S}}\sum_{\action\in \mathcal{A}}x_{\pi}(s,\action)\mathcal{T}(s,\action,s') = \mu(s')\quad \text{for all } s' \in \mathcal{S}\\
                &x_{\pi}(s, \action) \geq 0 \quad \text{ for all } s\in\mathcal{S}, \action\in\mathcal{A}.
            \end{aligned}
        \end{aligned}    
        \tag{MDP-D}\label{opt:MDP-D}
    \end{equation} 
}
\new{The optimal policy~$\pi^*$ can be computed from the optimal solution to Problem~\eqref{opt:MDP-D}~$x_{\pi}^*$ via}
\new{
\begin{equation}\label{eq:pifromx}
    \pi^*(a\mid s) = \frac{x_{\pi}^*(s, a)}{\sum_{a'\in\mathfrak{A}}x_{\pi}^*(s, a')}.
\end{equation}}
\new{
Any policy for a given MDP admits a value function~$v_{\pi}$, defined as~$v_{\pi}(s) = \mathbb{E}\left[\sum_{t=0}^{\infty} \gamma^t r(s_t, a_t)\mid a_t\sim\pi(s_t), s_0 = s\right]$, and is readily computable~\cite{puterman2014markov}. Examples of constraints that~$A$ may encode include the probability of reaching a goal state or safety in the sense of avoiding hazardous states.}

\new{The constraint definition in Definition~\ref{def:cmdp} is more general than that in~\cite{benvenuti2024guaranteed} by including a dependency on a database~$D$. As a result, computing the constraints in Problem~\eqref{opt:MDP-D} may leak sensitive information about~$D$. Thus, we privatize~$f(D, x_{\pi}(s, a))$ and~$f_0(D)$ using Algorithm~\ref{algo:solve} in order to protect the privacy of~$D$.}

\new{We consider the safety scenario originally proposed in~\cite{chow2018lyapunov} and extended in~\cite{benvenuti2024guaranteed}, where~$\SH:D\to \mathcal{S}$ the mapping from a database~$D$ to a set of hazardous states~$\SH(D)$
and~$f(D; s) = \beta_{s}\mathbb{I}\{s\in\SH(D)\}$. In words,~$f$ encodes that an agent incurs penalty~$\beta_s$ for occupying state~$s$, while the database~$D$ encodes which states are hazardous. The formulation of~$f$ above yields the constraint~$\Expectation{\sum_{t=0}^{\infty}\gamma^t \beta_{s_t}f(D; s_t)\mid s_0, \pi}\leq f_0(D)$, which takes the form~$A(D)X\leq f_0(D)$, where~$A(D)$ is a row vector with
\begin{equation}
    A(D)_{i} =\begin{cases}
        \beta_{s}\gamma & \text{if }s\in \SH(D)\\
        0 & \textnormal{otherwise}
    \end{cases}.
\end{equation}
Without any protections, the use of~$A(D)$ in policy synthesis 
may leak sensitive information about~$D$, i.e., reveal knowledge that a state is hazardous, and we use Algorithm~\ref{algo:solve} to formulate a privatized form of Problem~\eqref{opt:MDP-D} that we can solve to obtain a privacy-preserving solution~$x_{\tilde{\pi}^*}$. We then post-process this solution with~\eqref{eq:pifromx} to obtain a privacy-preserving policy~$\tilde{\pi}^*$. To assess the performance of the privatized policy~$\tilde{\pi}^*$, we use the cost of privacy metric from~\cite{gohari2020privacy, benvenuti2023differentially, benvenuti2024guaranteed}, defined as
\begin{equation}
    \xi = \frac{v_{\tilde{\pi}^*}(s_0)-v_{\pi^*}(s_0)}{v_{\pi^*}(s_0)}.
\end{equation}}
\begin{figure}
    \centering
    \input{Figures/gridworld2}
    \caption{\new{Grid in which the agent starts at the blue state,
    its goal is the green state, and hazardous states are red. 
    }}
    \label{fig:safety_grid}
\end{figure}

\new{Consider the gridworld environment in Figure~\ref{fig:safety_grid}. We apply Algorithm~\ref{algo:solve} compute a privatized policy~$\tilde{\pi}^*$, which preserves the privacy of the hazardous state set and the hazard tolerance~$f_0(D)$ while guaranteeing that the safety constraint is satisfied. We set~$\beta_i = 0.6$ for all~$i\in\SH$ and we take~$\sup_{D\in\dbset} A(D)_{i,j} = 0.9$. Additionally, we take~$f_0 = 0.6$ and~$\inf_{D\in\dbset} b(D)_{i} = 0.3$.}

\new{In Figure~\ref{fig:cop} we evaluate the cost of privacy as a percent, equal to~$\xi\times 100\%$, of the policy generated using Algorithm~\ref{algo:solve}, where both~$f(D, s_t)$ and~$f_0(D)$ are privatized. Additionally, we compare against a policy computed using the approach detailed in~\cite{munoz2021private} that can only keep~$f_0(D)$ private. That approach leaks sensitive information about~$D$, 
and similar to case (ii) in Section~\ref{sec:sim} the work in~\cite{munoz2021private} provides a baseline to compare how performance is affected when privacy is applied to both~$A(D)$ and~$b(D)$ instead of only~$b(D)$. Accordingly, we expect Algorithm~\ref{algo:solve} to have a higher cost of privacy, by virtue of keeping more information private then~\cite{munoz2021private}.}
\new{
We cannot compare against~\cite{hsu2014privately} in this scenario as their approach failed to converge to a solution for all samples we simulated at every~$\epsilon$ value.  }

\new{We consider privacy parameters~$\epsilon \in[0.1, 10]$ and~$\delta = 0.1$, and budget allocations~$\alpha_A = 0.99$ and~$\alpha_b = 0.01$ averaged over~$200$ samples. 
In the strong privacy regime, i.e.,~$\epsilon= 1$, Algorithm~\ref{algo:solve} yields~$0.22\%$ sub-optimality, which is~$0.21\%$ more than only privatizing~$f_0(D)$ using the approach in~\cite{munoz2021private}. This low sub-optimality with strong privacy implies that this problem is highly compatible with privacy. Additionally, as privacy weakens we see the performance gap between Algorithm~\ref{algo:solve} and~\cite{munoz2021private} decrease, with only a~$0.11\%$ difference in sub-optimality at~$\epsilon = 3$, implying that under typical privacy implementations, we find negligible difference in performance when providing privacy for~$A(D)$ in addition to~$b(D)$ in this scenario.}
\begin{figure}
    \centering
%
%
\definecolor{chocolate2267451}{RGB}{226,74,51}
\definecolor{dimgray85}{RGB}{85,85,85}
\definecolor{gainsboro229}{RGB}{229,229,229}
\definecolor{lightgray204}{RGB}{204,204,204}
\definecolor{steelblue52138189}{RGB}{52,138,189}
\definecolor{black}{RGB}{0, 0, 0}
\definecolor{GTblue}{RGB}{0, 48, 87}
\definecolor{GTgold}{RGB}{179, 163, 105}
\definecolor{UFOrange}{RGB}{250, 70, 22}
\definecolor{UFblue}{RGB}{0, 33, 165}
\begin{tikzpicture}

\begin{axis}[%
width=0.36\figW,
height=\figH,
axis background/.style={fill=gainsboro229},
axis line style={white},
scale only axis,
xlabel=\textcolor{black}{{Privacy Strength, $\epsilon$}},
xtick style={color=dimgray85},
x grid style={white},
yminorticks=true,
y grid style={white},
ylabel=\textcolor{black}{Cost of Privacy,~$\xi\times 100\%$},
xmajorgrids,
ymajorgrids,
yminorgrids,
tick align=outside,
tick pos=left,
yticklabel={$\pgfmathprintnumber{\tick}$\%},
legend pos = north east,
]
\addplot [color=UFOrange, ultra thick]
  table[row sep=crcr]{%
0.1	0.219933557629557\\
0.2	0.220853962181431\\
0.3	0.224933943275288\\
0.4	0.220082714312181\\
0.5	0.22300558551445\\
0.6	0.219920857953001\\
0.7	0.219998462022545\\
0.8	0.222879701579695\\
0.9	0.210493742723093\\
1	0.209681395020848\\
1.1	0.213128259877276\\
1.2	0.212986674922747\\
1.3	0.209722915294403\\
1.4	0.208241855677032\\
1.5	0.210070726574992\\
1.6	0.202143152879331\\
1.7	0.179290810907531\\
1.8	0.18366588500343\\
1.9	0.163095910970824\\
2	0.15916546477635\\
2.1	0.158963177487318\\
2.2	0.152938901777344\\
2.3	0.141754538397442\\
2.4	0.138665982578708\\
2.5	0.131602328590212\\
2.6	0.12592180743745\\
2.7	0.130584461031345\\
2.8	0.124631912951658\\
2.9	0.123300093124979\\
3	0.124570336501826\\
3.1	0.125744381959872\\
3.2	0.121964245761639\\
3.3	0.12341829805833\\
3.4	0.123137840314235\\
3.5	0.121910964245015\\
3.6	0.124235545107881\\
3.7	0.12542738059027\\
3.8	0.123766813526612\\
3.9	0.122756387854092\\
4	0.121685248586413\\
4.1	0.122297825492843\\
4.2	0.123812826824359\\
4.3	0.123496974516043\\
4.4	0.121090287179134\\
4.5	0.123169540944373\\
4.6	0.119191629939363\\
4.7	0.116691650539054\\
4.8	0.122487885540746\\
4.9	0.121230085309424\\
5	0.120010383655206\\
5.1	0.11906053500527\\
5.2	0.115705707518882\\
5.3	0.12382864063314\\
5.4	0.123222475491627\\
5.5	0.114721893272035\\
5.6	0.116757294266118\\
5.7	0.121924958481683\\
5.8	0.118146683102972\\
5.9	0.121324380437135\\
6	0.120871930410489\\
6.1	0.119471634136673\\
6.2	0.113965605085602\\
6.3	0.119592493117156\\
6.4	0.115768586905511\\
6.5	0.118703533640066\\
6.6	0.116254660874962\\
6.7	0.113576385751546\\
6.8	0.117219734959583\\
6.9	0.117526287739406\\
7	0.120687493017406\\
7.1	0.119083735013437\\
7.2	0.109219126375531\\
7.3	0.111310995747117\\
7.4	0.111011837907987\\
7.5	0.108719663142318\\
7.6	0.105876912649788\\
7.7	0.105399597052475\\
7.8	0.100532284203073\\
7.9	0.0975800074763762\\
8	0.0987440351071103\\
8.1	0.0959777450422745\\
8.2	0.0938711383926269\\
8.3	0.0875920586859077\\
8.4	0.086731223664119\\
8.5	0.0866438010435774\\
8.6	0.0881286604842673\\
8.7	0.0857554240286767\\
8.8	0.0804278481967026\\
8.9	0.0774579141089072\\
9	0.0742707552800405\\
9.1	0.0766889989709513\\
9.2	0.0749656315550816\\
9.3	0.0699888545575929\\
9.4	0.0651084001878293\\
9.5	0.0618183052970905\\
9.6	0.0626763014730934\\
9.7	0.0572099660811544\\
9.8	0.0513055691263667\\
9.9	0.0518523984079438\\
10	0.0521788287674716\\
};
\addlegendentry{Algorithm~\ref{algo:solve}}

\addplot [color=UFblue, ultra thick]
  table[row sep=crcr]{%
0.1	0.00882394247002594\\
0.2	0.00886282526462909\\
0.3	0.00910368892282197\\
0.4	0.00872239377364948\\
0.5	0.00889392983489612\\
0.6	0.00883016723994517\\
0.7	0.00889532870905659\\
0.8	0.0090002811102098\\
0.9	0.00834109247082864\\
1	0.00848609442012799\\
1.1	0.00881770894135615\\
1.2	0.0086751423102659\\
1.3	0.00872082545730462\\
1.4	0.00891790147822993\\
1.5	0.00898641509523781\\
1.6	0.00886537087249277\\
1.7	0.00857649687539081\\
1.8	0.00898314006428546\\
1.9	0.00853172827716919\\
2	0.00882581522826076\\
2.1	0.00881770786457271\\
2.2	0.00903511152695003\\
2.3	0.00899793242465271\\
2.4	0.00896658337822439\\
2.5	0.00873991993056849\\
2.6	0.00881770647907511\\
2.7	0.00911786417309815\\
2.8	0.00890340197456085\\
2.9	0.00867472002165974\\
3	0.0087090302357639\\
3.1	0.00897234942393713\\
3.2	0.00872237992254457\\
3.3	0.00877004615502261\\
3.4	0.00873480660782666\\
3.5	0.00870745646989875\\
3.6	0.00885762656402942\\
3.7	0.00908563018040314\\
3.8	0.00894608924988022\\
3.9	0.00884120245767309\\
4	0.00886537166461004\\
4.1	0.00896069650950503\\
4.2	0.0089438920659768\\
4.3	0.00892862587746672\\
4.4	0.00893865010917528\\
4.5	0.00902884177152854\\
4.6	0.00862907616161885\\
4.7	0.00852436642065786\\
4.8	0.00896070772685499\\
4.9	0.00900836043512676\\
5	0.0088970037792245\\
5.1	0.00874935649969154\\
5.2	0.00857939911524097\\
5.3	0.00902208469110837\\
5.4	0.00908329360227443\\
5.5	0.00842231347557872\\
5.6	0.00853173051858248\\
5.7	0.00896069631599476\\
5.8	0.0087223831196404\\
5.9	0.00891303625229295\\
6	0.00891303927135146\\
6.1	0.00872238274357326\\
6.2	0.00843640078802731\\
6.3	0.00877004463805058\\
6.4	0.00857503515434774\\
6.5	0.00877004460490686\\
6.6	0.00853909692295628\\
6.7	0.00843640589834384\\
6.8	0.00872238430978291\\
6.9	0.00891303303560565\\
7	0.0092220964691395\\
7.1	0.00900836120513379\\
7.2	0.00848764472559032\\
7.3	0.00881899251655703\\
7.4	0.00898894332154823\\
7.5	0.00877004679325418\\
7.6	0.008865374735438\\
7.7	0.00897668848120202\\
7.8	0.00872238446824078\\
7.9	0.00877004538542271\\
8	0.00896069867288066\\
8.1	0.00873115204671993\\
8.2	0.00871051721310983\\
8.3	0.00862025455605973\\
8.4	0.00867678527290478\\
8.5	0.00866835919210934\\
8.6	0.00915135433265532\\
8.7	0.00872238703185556\\
8.8	0.00873165929527334\\
8.9	0.00857940063406011\\
9	0.00854933220306064\\
9.1	0.00882084720237688\\
9.2	0.00896070590742248\\
9.3	0.00886536975734038\\
9.4	0.00857099684711104\\
9.5	0.00850813554363019\\
9.6	0.00896069769233231\\
9.7	0.008722387159527\\
9.8	0.00853172731406312\\
9.9	0.00872238738110045\\
10	0.00913412329534952\\
};
\addlegendentry{\cite{munoz2021private}}

\end{axis}
\end{tikzpicture}%
    \caption{\new{Cost of Privacy for privately generated policies using Algorithm~\ref{algo:solve} and~\cite{munoz2021private}. The sub-optimality in~\cite{munoz2021private} remains constant, however their approach leaks private information about~$D$ since their approach cannot privatize~$A(D)$. The sub-optimality of Algorithm~\ref{algo:solve} approaches that of~\cite{munoz2021private} as~$\epsilon$ increases, indicating that at low privacy we recover the performance of~\cite{munoz2021private} without privacy leakage.
    }}
    \label{fig:cop}
\end{figure}

\new{\section{Reproducibility Checklist}\label{ap:checklist}
Some authors of this work maintain an affiliation which restricts the sharing of code, and thus to comply with these guidelines, we refrain from sharing the code used to generate the numerical results seen in the paper and technical appendix. We stress that the implementation of Algorithm~\ref{algo:solve} in the settings of Section~\ref{sec:sim} and Technical Appendix~\ref{app:cmdp} is straightforward. The optimization problems are made in the \texttt{problem} environment in MATLAB, and the constraint terms~$A$ and~$b$ and the cost vector~$c$ are directly accessed in this environment and modified as described in Algorithm~\ref{algo:solve} before solving using any solver. Snippets of code approved for sharing with the public may be made available upon request.}

\samepage

\end{document}